\newcommand{\be}{\begin{equation}}
\newcommand{\ee}{\end{equation}}
\newcommand{\beq}{\begin{eqnarray}}
\newcommand{\eeq}{\end{eqnarray}}
\newtheorem{thm}{Theorem}[section]
\newtheorem{lma}{Lemma}[section]
\newtheorem{cor}{Corollary}[section]
\theoremstyle{remark}
\newtheorem{rem}{Remark}[section]
\numberwithin{equation}{section}
\def\p{\partial}
\def\C{\mathcal{C}}
\def\R{\mathbb{R}}
\def\p{\partial}
\def\lf{\left}
\def\ri{\right}
\def\e{\epsilon}
\def\ol{\overline}
\def\R{\Bbb R}
\def\wt{\widetilde}
\def\la{\langle}
\def\ra{\rangle}
\def\Ric{\text{\rm Ric}}
\def\Pi{\overline{\displaystyle{\mathbb{II}}}}
\def\a{\alpha}
\def\b{\beta}
\def\heat{\lf(\Delta -\frac{\p}{\p t}\ri)}
\def\K{K\"ahler }
\def\heat{\lf(\frac{\p}{\p t}-\Delta\ri)}
\def\Ric{\text{Ric}}
\def\lf{\left}
\def\ri{\right}
\def\a{\alpha}
\def\ol{\overline}
\def\e{\epsilon}
\def\p{\partial}
\def\C{\Bbb C}
\def\R{\Bbb R}
\def\bb{{\bar{\beta}}}
\def\Ric{\operatorname{Ric}}
\def\Rm{\operatorname{Rm}}
\def\K{K\"ahler\ }
\def\be{\begin{equation}}
\def\ee{\end{equation}}
\def\b{\bar}
\def\ppt{\frac{\partial}{\partial t}}
\def\bee
\def\eee{\end{equation*}}
\def\la{\langle}
\def\ra{\rangle}
\def\bee{\begin{equation*}}
\def\eee{\end{equation*}}
\def\ol{\overline}
\def\e{\epsilon}
\def\lf{\left}
\def\heat{\lf(\frac{\p}{\p t}-\Delta\ri)}
\def\ri{\right}
\def\a{{\alpha}}
\def\b{{\beta}}
\def\bb{{\bar\b}}
\def\wt{\widetilde}
\def\p{\partial}
\def\K{K\"ahler }
\def\KR{K\"ahler-Ricci }
\def\be{\begin{equation}}
\def\ee{\end{equation}}
\def\ol{\overline}
\def\lf{\left}
\def\ri{\right}
\def\a{{\alpha}}
\def\b{{\beta}}
\def\e{\epsilon}
\def\bb{{\bar\b}}
\def\Ric{\text{\rm Ric}}
\def\Rm{\text{\rm Rm}}
\def\wt{\widetilde}
\def\p{\partial}
\def\C{\Bbb C}
\def\wt{\widetilde}
\def\p{\partial}
\def\p{\partial}
\def\C{\Bbb C}
\def\ppt{\frac{\partial}{\partial t}}
\def\la{\langle}
\def\ra{\rangle}
\def\bee{\begin{equation*}}
\def\eee{\end{equation*}}
\def\ol{\overline}
\def\e{\epsilon}
\def\lf{\left}
\def\heat{\lf(\frac{\p}{\p t}-\Delta\ri)}
\def\ri{\right}
\def\a{{\alpha}}
\def\b{{\beta}}
\def\bb{{\bar\b}}
\def\wt{\widetilde}
\def\p{\partial}
\def\K{K\"ahler }
\def\KR{K\"ahler-Ricci }
\def\be{\begin{equation}}
\def\ee{\end{equation}}
\def\ol{\overline}
\def\lf{\left}
\def\ri{\right}
\def\a{{\alpha}}
\def\b{{\beta}}
\def\e{\epsilon}
\def\bb{{\bar\b}}
\def\Ric{\text{\rm Ric}}
\def\Rm{\text{\rm Rm}}
\def\wt{\widetilde}
\def\p{\partial}
\def\C{\Bbb C}
\def\wt{\widetilde}
\def\p{\partial}
\def\p{\partial}
\def\C{\Bbb C}
\def\ppt{\frac{\partial}{\partial t}}
\def\la{\langle}
\def\ra{\rangle}
\def\bee{\begin{equation*}}
\def\eee{\end{equation*}}
\def\ol{\overline}
\def\e{\epsilon}
\def\lf{\left}
\def\heat{\lf(\frac{\p}{\p t}-\Delta\ri)}
\def\ri{\right}
\def\a{{\alpha}}
\def\b{{\beta}}
\def\bb{{\bar\b}}
\def\wt{\widetilde}
\def\p{\partial}
\def\K{K\"ahler }
\def\KR{K\"ahler-Ricci }
\def\be{\begin{equation}}
\def\ee{\end{equation}}
\def\ol{\overline}
\def\lf{\left}
\def\ri{\right}
\def\a{{\alpha}}
\def\b{{\beta}}
\def\e{\epsilon}
\def\bb{{\bar\b}}
\def\Ric{\text{\rm Ric}}
\def\Rm{\text{\rm Rm}}
\def\wt{\widetilde}
\def\p{\partial}
\def\C{\Bbb C}
\def\wt{\widetilde}
\def\p{\partial}
\def\p{\partial}
\def\C{\Bbb C}
\def\ppt{\frac{\partial}{\partial t}}
\def\la{\langle}
\def\ra{\rangle}
\def\bee{\begin{equation*}}
\def\eee{\end{equation*}}
\def\ol{\overline}
\def\e{\epsilon}
\def\lf{\left}
\def\heat{\lf(\frac{\p}{\p t}-\Delta\ri)}
\def\ri{\right}
\def\a{{\alpha}}
\def\b{{\beta}}
\def\bb{{\bar\b}}
\def\wt{\widetilde}
\def\p{\partial}
\def\K{K\"ahler }
\def\KR{K\"ahler-Ricci }
\def\be{\begin{equation}}
\def\ee{\end{equation}}
\def\ol{\overline}
\def\lf{\left}
\def\ri{\right}
\def\a{{\alpha}}
\def\b{{\beta}}
\def\e{\epsilon}
\def\bb{{\bar\b}}
\def\Ric{\text{\rm Ric}}
\def\Rm{\text{\rm Rm}}
\def\wt{\widetilde}
\def\p{\partial}
\def\C{\Bbb C}
\def\wt{\widetilde}
\def\p{\partial}
\def\p{\partial}
\def\C{\Bbb C}
\def\ppt{\frac{\partial}{\partial t}}
\begin{document}

\title[]
{On existence and curvature estimates of Ricci flow}

 \author{Man-Chun Lee}
\address[Man-Chun Lee]{Department of
 Mathematics, The Chinese University of Hong Kong, Shatin, Hong Kong, China.}
\email{mclee@math.cuhk.edu.hk}

\author{Luen-Fai Tam$^1$}
\address[Luen-Fai Tam]{The Institute of Mathematical Sciences and Department of
 Mathematics, The Chinese University of Hong Kong, Shatin, Hong Kong, China.}
 \email{lftam@math.cuhk.edu.hk}
\thanks{$^1$Research partially supported by Hong Kong RGC General Research Fund \#CUHK 14305114}

\renewcommand{\subjclassname}{
  \textup{2010} Mathematics Subject Classification}
\subjclass[2010]{Primary 32Q15; Secondary 53C44
}

\date{April, 2017}

\begin{abstract} In this work, using the method by He \cite{He2016}, we prove a short time existence for Ricci flow on a complete noncompact Riemannian manifold with the following properties: (i) there is $r_0>0$ such that the volume of any geodesic balls of radius $r\le r_0$ is close to the volume of the geodesic ball in the Euclidean space with the same radius; (ii) the Ricci curvature is bounded below; (iii) the sectional curvature is bounded below by $-Cd^2(x,x_0)$ where $d(x,x_0)$ is the distance from a fixed point. An application to the uniformization for complete noncompact \K manifolds with nonnegative bisectional curvature and maximal volume growth is given. The result is related to the results of He \cite{He2016} and Liu \cite{Liu2015,Liu2016}. We   prove if  $g(t)$ is a complete solution  to the \KR flow on a complete noncompact \K manifold so that $g(0)$ has nonnegative bisectional curvature and the curvature of $g(t)$ is bounded by $at^{-\theta}$ for some $a>0, 2>\theta>0$, then $g(t)$ also has nonnegative bisectional curvature. This generalizes a previous result in \cite{HuangTam2015}. Under the assumption that $g(0)$ is noncollapsing, with additional assumption that the curvature of $g(t)$ is uniformly bounded away from $t>0$, we prove that the curvature of $g(t)$ is in fact bounded by $at^{-1}$ for some $a>0$. This result is proved using  the method by Simon and Topping \cite{SimonTopping2016}.

\end{abstract}

\keywords{Ricci flow, K\"ahler manifold, holomorphic bisectional curvature, uniformization}

\maketitle

\markboth{Man-Chun Lee and Luen-Fai Tam}{Existence and curvature estimates of Ricci flow}
\section{introduction}

In \cite{Y}, Yau made the following conjecture:
 {\it A complete noncompact
K\"ahler manifold of complex dimension $m$ with positive holomorphic bisectional curvature is
biholomorphic to $\C^{m}$.} There are many results related to this conjecture. For a survey, see \cite{ChauTam2008} for example. For  the case that the manifold has maximal volume growth, there are some partial  results.
Let $(M^m,g)$ be a complete noncompact \K manifold of complex dimension $m$ with nonnegative holomorphic bisectional curvature and with maximal volume growth.   It was proved by Chau and the second author \cite{ChauTam2008}  that Yau's conjecture is true if   the curvature is bounded.  Without assuming the curvature is bounded, recently it is proved that $M$ is also biholomorphic to $\C^m$ in the following cases:

\begin{enumerate}
  \item [(i)] $M$ has nonnegative complex sectional curvature. Proved by Huang and the second author \cite{HuangTam2015}  using the result of Cabezas-Rivas and Wilking \cite{Cabezas-RivasWilking2011}.
  \item [(ii)] Either $m\le3$ or the asymptotical volume ratio satisfies $$\lim_{r\to\infty}\frac{V(r)}{r^{n}}\ge (1-\e)\omega_{n}$$
      provided $\e>0$ is small enough depending only on $n=2m$, where $\omega_{n}$ is the volume of the unit ball in $\R^{n}$. Proved by Liu \cite{Liu2015, Liu2016}.
  \item [(iii)] There exists $r_0>0$ such that $|\p\Omega|^n\ge (1-\e)I_n |\Omega|^{n-1}$ for some $\e>0$ depending only on $n=2m$ for all $\Omega \subset B(x,r_0)$ for all $x$, where $I_n$ is the optimal isoperimetric constant of $\R^n$. Proved by He \cite{He2016}.
\end{enumerate}

The condition on asymptotical volume ratio in (ii) or the condition on the isoperimetric constant in (iii) implies that there is $r_0>0$ such that $V(B(x,r_0))\ge (1-\e)r_0^{n}\omega_n$. In this note we want to study Yau's conjecture under this condition. We will prove the following:
\begin{thm}\label{t-intro-1} There is $\delta_0>0$ depending only on $m$ such that the following is true: Suppose $(M^m,g)$ is a complete noncompact \K manifold with complex dimension $m$, nonnegative bisectional curvature and with maximum volume growth such that $V_g(x,r_0)\ge (1-\delta_0)r_0^{2m}\omega_{2m}$ for some $r_0>0$  for all $x$ and the sectional curvature of $M$ satisfies $K_g(x)\ge -r^2(x) $ where $r(x)$ is the distance to a fixed point. Then $M$ is biholomorphic to $\C^m$.
\end{thm}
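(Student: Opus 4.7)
The plan is to deform $g$ by the Kähler--Ricci flow to obtain, at some positive time, a complete metric with bounded nonnegative bisectional curvature and maximal volume growth, and then quote the Chau--Tam theorem \cite{ChauTam2008} to conclude. Two tools from this paper are used as black boxes: the short-time existence theorem built on the method of He \cite{He2016}, and the preservation of nonnegative bisectional curvature under the decay $|\Rm|_{g(t)}\le at^{-\theta}$ with $\theta<2$.

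First I would verify that the hypotheses of Theorem \ref{t-intro-1} exactly match the input of the short-time existence theorem: nonnegative bisectional curvature forces $\Ric_g\ge 0$, the bound $V_g(x,r_0)\ge (1-\delta_0)r_0^{2m}\omega_{2m}$ (with $\delta_0$ chosen according to that theorem and depending only on $m$) is the almost-Euclidean small-ball volume condition (property (i) of the abstract), and $K_g(x)\ge -r^2(x)$ is the quadratic sectional lower bound (property (iii)). This produces a Kähler--Ricci flow $g(t)$ on $M\times[0,T]$ with $g(0)=g$ satisfying $|\Rm|_{g(t)}\le at^{-\theta}$ for some $a>0$ and $\theta\in(0,2)$. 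Since this is precisely the decay required by the preservation theorem of this paper, $g(t)$ retains nonnegative bisectional curvature on $[0,T]$; in particular $\Ric_{g(t)}\ge 0$, hence $\partial_t g\le 0$ as bilinear forms. Fixing a small $t_0>0$, Shi's derivative estimates combined with the Simon--Topping-style argument from this paper upgrade the decay to $|\Rm|_{g(t)}\le a/t$ and give bounded curvature of $g(t_0)$ on all of $M$; the distance comparison $d_{g(t_0)}\le d_{g(0)}$, Bishop--Gromov monotonicity applied to $g(t_0)$, and the initial maximal volume growth of $g(0)$ together then imply that $g(t_0)$ still has maximal volume growth.

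Thus $(M, g(t_0))$ is a complete noncompact Kähler manifold of complex dimension $m$ with bounded nonnegative bisectional curvature and maximal volume growth, so the Chau--Tam theorem \cite{ChauTam2008} identifies it biholomorphically with $\mathbb{C}^m$; since the complex structure is unchanged along the Kähler--Ricci flow, this identifies $(M, g)$ with $\mathbb{C}^m$. The main obstacle is the preservation of maximal volume growth at positive times: the nonnegative Ricci curvature along the flow contracts both distances and volume forms, which is the wrong direction for an asymptotic lower volume bound, while the volume assumption on $g(0)$ is only local in scale. Reconciling these via Bishop--Gromov on $g(t)$ together with a bilipschitz or pseudolocality estimate derived from the curvature decay $at^{-\theta}$ is the delicate technical point, and choosing $\delta_0$ small enough depending only on $m$ is what makes all the estimates simultaneously compatible.
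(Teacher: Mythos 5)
Your overall route is the paper's: run the flow produced by Theorem \ref{t-shortime}, preserve nonnegative bisectional curvature along it, check that a fixed time slice $g(t_0)$ still has maximal volume growth, and quote \cite{ChauTam2008}. However, the step you state for maximal volume growth is a genuine gap. From $\Ric(g(t))\ge 0$ you do get $d_{g(t_0)}\le d_{g(0)}$, hence $B_0(x_0,r)\subset B_{t_0}(x_0,r)$, but the $g(t_0)$-volume of $B_0(x_0,r)$ is \emph{not} bounded below by its $g_0$-volume: the scalar curvature is nonnegative, so the volume form is non-increasing, and the only quantitative control available, $R\le C(n)\,a/t$, gives $\partial_t \log\lf(d\mathrm{vol}_{g(t)}/d\mathrm{vol}_{g(0)}\ri)\ge -C(n)a/t$, which is not integrable at $t=0$. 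So distance monotonicity plus Bishop--Gromov at time $t_0$ plus the initial maximal volume growth do not by themselves yield $V_{t_0}(x_0,r)\ge c\,r^{2m}$. The missing ingredient is the uniform non-collapsing $V_{g(t)}(x,\sqrt t)\ge C t^{n/2}$ for \emph{all} $x$, which is conclusion (ii) of Theorem \ref{t-shortime} (coming from the Tian--Wang pseudolocality, Theorem \ref{t-TW}), combined with a packing/covering argument; this is exactly how maximal volume growth is preserved in He \cite{He2016}, and the paper handles this step by citing \cite{He2016}. You flag the difficulty at the end, but the implication as you wrote it would fail, so you need to supply (or cite) that argument.

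Two smaller points. First, Theorem \ref{t-shortime} already delivers $|\Rm(g(t))|\le \a/t$ with $\a$ as small as one likes (depending only on $m$, through the choice of $\delta_0$), so there is nothing to ``upgrade'' via Shi estimates or the Simon--Topping-type Theorem \ref{t-curvature-est-1}; invoking the latter here is unnecessary and its hypotheses are not what you have at this stage. Second, Theorem \ref{t-shortime} produces a Ricci flow, not a priori a K\"ahler--Ricci flow; since $g_0$ may have unbounded curvature you cannot appeal to standard uniqueness, and you must cite \cite{HuangTam2015} for the K\"ahlerity of $g(t)$ under the $a/t$ bound --- after which you may either use \cite{HuangTam2015} with $\a$ small (as the paper does in Corollary \ref{c-uniformization}) or your Theorem \ref{t-intro-2} to preserve nonnegative bisectional curvature.
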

In particular, the theorem can be applied to \K manifolds with nonnegative sectional curvature. The lower bound of the sectional curvature can be further relaxed, see Corollary \ref{c-uniformization} for more details.
The idea of proof is to use Ricci flow which was originated by W. S. Shi \cite{Shi1997}. The main point is to deform the  metric to a metric with bounded curvature and with nonnegative bisectional curvature so that one can use the result in \cite{ChauTam2008}. Theorem \ref{t-intro-1} is then a consequence of Theorem \ref{t-shortime} and the result in \cite{HuangTam2015} which says that on a complete solution $g(t)$ to the Ricci flow with curvature bound $|\Rm(g(t))|\le a/t$, then $g(t)$ is \K if $g(0)$ is K\"ahler, and has nonnegative bisectional curvature provided that $a$ is small depending only on the dimension. We  use  the idea of He \cite{He2016} to prove Theorem \ref{t-shortime} which is a short time existence result for Ricci flow. Instead of using the pseudo-locality theorem of Perelman \cite{Perelman-1}(see also \cite{cty}) as by He \cite{He2016},
we will use the following pseudo-locality theorem by Tian-Wang \cite{TW}, see Theorem \ref{t-TW}.

\begin{rem} In a most updated version of \cite{Liu2016}, Liu actually proves Yau's conjecture under the assumption of maximal volume growth. His method is very different.

\end{rem}

 There are solutions $g(t)$ to the \KR flow with the property that $|\Rm(g(t))|\le a/t$ where $a>0$ may not be small, see \cite{ChauLiTam1} for example. So it is desirable to remove the restriction that $a$ is small in a result in \cite{HuangTam2015} on the preservation of nonnegativity of bisectional curvature. In fact, we will prove a stronger result:
\begin{thm}\label{t-intro-2} Let $(M^n,g(t))$  be a smooth complete noncompact solution to the \KR flow on $M\times[0,T]$, $T>0$. Suppose $g(0)$ has nonnegative bisectional curvature and suppose
$|\Rm(g(t))|\le a/t^\theta$ on $M$ for some $a>0$ and for some $0<\theta<2$ for all $t\in (0,T]$. Then $g(t)$ also has nonnegative bisectional curvature for $t\in[0,T]$. \end{thm}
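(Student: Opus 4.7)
My plan is to apply Hamilton's maximum principle for tensors to the bisectional curvature of $g(t)$, after introducing a carefully constructed perturbation that absorbs both the $t\to 0^+$ curvature singularity and the non-compactness of $M$. Write $Z(X,Y,x,t):=R_{\abb\gbd}(x,t)X^\a\bar X^\b Y^\g\bar Y^\d$. Under the \KRF, after parallel-transporting $X,Y$ around the base point, $Z$ satisfies $\heat Z=Q(R)(X,Y)$, where $Q$ is Mok's quadratic expression; Mok's lemma gives $Q\geq 0$ at any null pair of a tensor with pointwise nonnegative bisectional curvature, which is the algebraic engine of the preservation result in the bounded-curvature case.

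The hypothesis $\theta<2$ enters through Hamilton's distance-distortion estimate: $|\Rm(g(s))|\leq as^{-\theta}$ with $\theta<2$ gives $\int_0^T|\Rm|^{1/2}\,ds\leq a^{1/2}\int_0^T s^{-\theta/2}\,ds<\infty$, hence $d_{g(t)}$ and $d_{g(0)}$ are uniformly comparable on $[0,T]$. Since $g(0)$ has nonnegative Ricci curvature, a smoothed $g(0)$-distance exhaustion $\psi:M\to[1,\infty)$ with controlled $g(0)$-Laplacian exists (Laplacian comparison), and by the previous comparison $\psi$ remains a proper exhaustion for every $g(t)$ with tame $g(t)$-Laplacian. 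For $\e,\eta>0$ I then consider the perturbed quantity
\begin{equation*}
\Phi_{\e,\eta}(X,Y,x,t):=Z(X,Y,x,t)+\e\bigl(\phi(t)+\eta\,\psi(x)\bigr)\bigl(|X|^2|Y|^2+|\la X,\bar Y\ra|^2\bigr),
\end{equation*}
where the time barrier $\phi(t)\geq 0$ is smooth on $[0,T]$ and designed so that $\phi'(t)$ dominates Mok's reaction $C_0at^{-\theta}\phi(t)$ by a uniform positive margin on $[0,T]$ ($C_0$ being a dimensional constant from Mok's computation on near-null pairs). Such $\phi$ can be manufactured by the integrating-factor method for any $\theta<2$, with case-by-case formulas as $\theta$ crosses $1$. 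Because $\psi\geq 1$, one has $\Phi_{\e,\eta}>0$ at $t=0$.

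The argument then proceeds by contradiction: if $\Phi_{\e,\eta}$ were to become negative on $M\times(0,T]$, a first failure point $(x_0,t_0,X_0,Y_0)$ would exist, the spatial barrier $\eta\psi$ supplying compactness of the minimizing set. Since $|Z(x_0,t_0)|\leq C|\Rm|\leq Cat_0^{-\theta}$ and $\Phi_{\e,\eta}(x_0,t_0)=0$, one gets the a priori bound $\e\eta\psi(x_0)\leq Cat_0^{-\theta}$, controlling all $\psi$-dependent errors. Now the shifted tensor $\tilde R:=R+\e(\phi(t_0)+\eta\psi(x_0))\cdot I$ has pointwise nonnegative bisectional curvature everywhere on $M$ at $t=t_0$ (by the first-time hypothesis) and $(X_0,Y_0)$ is a null pair for $\tilde R$ at $x_0$, so Mok's lemma applies. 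Combining this with $|\Rm|\leq at_0^{-\theta}$ and the design of $\phi$, a direct computation yields
\begin{equation*}
\heat\Phi_{\e,\eta}(x_0,t_0)\geq \e\bigl[\phi'(t_0)-C_0at_0^{-\theta}\phi(t_0)\bigr]-C\e\eta\bigl(|\Delta_{g(t_0)}\psi|(x_0)+at_0^{-\theta}\psi(x_0)\bigr)>0
\end{equation*}
after choosing $\eta$ small enough given the $\phi$-margin and the a priori bound on $\psi(x_0)$. This contradicts the maximum principle inequality $\heat\Phi_{\e,\eta}\leq 0$ at an interior space-time minimum. Hence $\Phi_{\e,\eta}\geq 0$; letting $\eta\to 0$ and then $\e\to 0$ yields $Z\geq 0$ on all of $M\times[0,T]$.

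The principal technical obstacle is the joint calibration of the time-barrier $\phi$ and the spatial barrier $\psi$: $\phi$ must out-grow Mok's reaction near $t=0$ without blowing up on $[0,T]$, while $\psi$ must be a proper exhaustion for every $g(t)$ with controlled Laplacian and satisfy an a priori bound at the extremal point. Both requirements are valid precisely because $\theta<2$, the sharp exponent at which Hamilton's $g(t)$-vs-$g(0)$ distance comparison survives. In this sense the smallness of $a$ in \cite{HuangTam2015} is replaced, in the new theorem, by the geometric flexibility of this barrier construction.
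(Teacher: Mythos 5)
Your strategy---perturb $Z$ by $\e(\phi(t)+\eta\,\psi(x))$ and run a first-violation maximum principle globally on $M$---breaks down at exactly the point that is the real content of the theorem: the existence of a first failure point. Your only positivity input is $\Phi_{\e,\eta}\ge \e\eta\psi\ge \e\eta>0$ at $t=0$; since $M$ is noncompact and the only control uniform in space near $t=0$ is $|\Rm(g(t))|\le at^{-\theta}$ (which blows up), continuity does not yield any $\tau>0$ with $\Phi_{\e,\eta}>0$ on all of $M\times(0,\tau]$. Nothing in your setup excludes a sequence $(x_k,t_k)$ with $t_k\to 0$, $x_k\to\infty$ and $Z(x_k,t_k)\le-\e\eta\psi(x_k)$: your a priori bound $\e\eta\psi\le Cat^{-\theta}$ at a zero point permits $\psi(x_k)\to\infty$ as $t_k\to 0$, so the set $\{\Phi_{\e,\eta}\le 0\}$ need not be spatially compact uniformly down to $t=0$, the infimum of its times may be $0$ and unattained, and the contradiction argument never gets off the ground. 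This is precisely why the paper does not argue globally: it first proves the bootstrap Lemma \ref{l-lowerbd-1}, showing that on compact sets $\mathrm{BK}(g(t))\ge-\exp(-t^{-\beta})$ for every $\beta<1$ near $t=0$ (an iterated improvement, not a consequence of smoothness), and then runs the maximum principle for the localized quantity $\Psi R+\lambda B$ with a compactly supported cutoff $\Psi$ built from the evolving distance and $\lambda(t)=\exp(-r^{1/2}t^{-\beta})$, $1>\beta>\theta-1$; the compact support restores both the initial positivity (the bootstrap bound beats $\lambda$ for small $t$) and the compactness needed for a first zero, the resulting estimate in Theorem \ref{t-lowerbound-BK-1} has constants uniform in the center and in $r\ge1$, and only then does one let $r\to\infty$ in the proof of Theorem \ref{t-nonnegative}. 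Your proposal contains no analogue of this localization-plus-bootstrap, and an ``exponentially small near $t=0$'' lower bound of this kind cannot be replaced by the crude $\e\eta\psi$ term.

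Two further steps are also not available as you state them. First, from $|\Rm(g(t))|\le at^{-\theta}$ with $\theta<2$ one gets only the one-sided estimate $d_t\ge d_0-C(n)a^{1/2}t^{1-\theta/2}$ and the barrier inequality $(\frac{\p}{\p t}-\Delta)(d(x,t)+Ct^{1-\frac\theta2})\ge0$ (Lemma \ref{l-distance}); two-sided uniform comparability of $d_t$ and $d_0$ would need a lower Ricci bound for $g(t)$, which is exactly what is not known a priori, and for $1\le\theta<2$ even uniform equivalence of the metrics fails to follow since $\int_0^T t^{-\theta}dt=\infty$. Hence your control of $\Delta_{g(t)}\psi$ for a fixed smoothed $g(0)$-distance via Laplacian comparison for $g(0)$ is unjustified; the paper works with the evolving distance instead. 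Second, the closing inequality does not close quantitatively: at the zero point the reaction contributes a term of size $Cat_0^{-\theta}\,\e\eta\psi(x_0)$, and your a priori bound controls only the product $\e\eta\psi(x_0)\le Cat_0^{-\theta}$, not $\psi(x_0)$ itself; so this term is bounded only by $C^2a^2t_0^{-2\theta}$, independent of $\eta$, while the margin $\e\bigl(\phi'(t_0)-C_0at_0^{-\theta}\phi(t_0)\bigr)$ carries a factor $\e$ and, for $\theta\ge1$, degenerates to infinite order as $t_0\to0$. Since $t_0$ is not under your control, no choice of $\eta$ made in advance absorbs it. In the paper this difficulty is handled by making the spatial localization multiplicative through $\Psi$ and exploiting the logarithmic cutoff estimates $|\phi'|\le C_m\phi|\log\phi|^{1+1/m}$ together with $|\log\Psi|\le Cr^{1/2}t^{-\beta}$ at the null point, so that every error term carries the factor $\lambda$ and is dominated by $\lambda'\sim r^{1/2}\beta t^{-1-\beta}\lambda$ thanks to $\beta+1>\theta$ and $\beta+1>2\beta(1+\frac1m)$.
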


For   three dimensional Riemannian manifolds, it is proved by Chen-Xu-Zhang \cite{ChenXuZhang2013} that nonnegativity of Ricci curvature is preserved   for complete solutions to the Ricci flow, without any assumption on the bound of the curvature. It was proved by Yang-Zheng \cite{YangZheng2013} that any complete $U(n)$ invariant solutions $g(t)$ to the \KR flow on $\C^n$ with $\mathrm{BK}(g(0))\ge0$   then so is $g(t)$ for all $t>0$, without any condition on the bound of curvature.  One may ask to what extent that Yang-Zheng's result is also true for general \KR flow. So we would like to discuss   conditions so that the curvature bound in Theorem \ref{t-intro-2} is true. With the curvature estimates that imply Theorem \ref{t-intro-2} (see Theorem \ref{t-lowerbound-BK-1}),  the method of Simon-Topping \cite{SimonTopping2016} can be carried over  to prove the following:

\begin{thm}\label{t-intro-3} Let $(M^n,g(t)$ be a smooth complete solution to the \K Ricci for for $t\in [0,T]$, $T>0$.
  Suppose further

  \begin{enumerate}
    \item [(i)] $\mathrm{BK}(g(0))\ge0$;
    \item [(ii)] $V_0(x,1)\ge v_0>0$ for all $x\in M$ for some $v_0>0$; and
    \item [(iii)] the curvature of $g(t)$ is uniformly bounded on $M\times(\tau, T]$ for all $\tau>0$.
  \end{enumerate}
     Then $|\Rm(g(t))|\le \displaystyle{\frac at}$  on $M$ for some $a>0$ for all $t\in (0,T]$. In particular, $\mathrm{BK}(g(t))\ge0$ for all $t\in (0,T]$.
\end{thm}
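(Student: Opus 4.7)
\emph{Overall strategy.} The plan is a contradiction/blow-up argument modelled on Simon--Topping \cite{SimonTopping2016}, with the lower curvature estimates underpinning Theorem \ref{t-intro-2} (namely Theorem \ref{t-lowerbound-BK-1}) playing the role of the Hamilton--Ivey pinching used in three dimensions. Suppose, for contradiction, that $\sup_{(0,T]}\sup_M\,t|\Rm(g(t))|=\infty$. Condition (iii) makes $\sup_M|\Rm(g(t))|$ finite for each $t>0$, so the blow-up must occur as $t\to 0^+$: there are sequences $t_k\to 0$ and $x_k\in M$ with $t_k|\Rm|(x_k,t_k)\to\infty$. A standard Perelman-type parabolic point-picking then produces refined sequences $(y_k,s_k)$ with $s_k\to 0$, $Q_k:=|\Rm|(y_k,s_k)\to\infty$, $s_kQ_k\to\infty$, and $|\Rm|\le 4Q_k$ on backward parabolic cylinders that, after parabolic rescaling by $Q_k$, exhaust $(-\infty,0]$ in time and grow unboundedly in space.

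\emph{Extraction of an ancient limit.} Parabolically rescale $\widetilde g_k(s):=Q_k\,g(s_k+s/Q_k)$ on $[-s_kQ_k,0]$. The initial noncollapsing (ii) propagates via Perelman's $\kappa$-noncollapsing theorem, applied on each $[\tau,T]$ using (iii) and then letting $\tau\to 0^+$, so the $\widetilde g_k$ are $\kappa$-noncollapsed at $(y_k,0)$ for a uniform $\kappa>0$. Shi's local derivative estimates together with Hamilton's compactness theorem extract a subsequential smooth pointed limit $(\widetilde M_\infty,\widetilde g_\infty(s),y_\infty)$ on $s\in(-\infty,0]$: a complete ancient \K-Ricci flow with $|\Rm|\le 4$, $\kappa$-noncollapsed, and $|\Rm(\widetilde g_\infty(0))|(y_\infty)=1$. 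The scale-invariant lower curvature estimates of Theorem \ref{t-lowerbound-BK-1} pass to this limit to give $\mathrm{BK}(\widetilde g_\infty(s))\ge 0$ throughout.

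\emph{Rigidity and closure.} The limit is thus a complete noncompact ancient $\kappa$-solution to the \K-Ricci flow with $\mathrm{BK}\ge 0$ and bounded curvature. Combining the trace Li--Yau--Hamilton inequality (which forces shifted-time scalar curvatures to be monotone along the flow), the uniform $\kappa$-noncollapsing, and Perelman's asymptotic shrinking-soliton analysis as $s\to-\infty$, one deduces that such a limit must be flat, contradicting $|\Rm(\widetilde g_\infty(0))|(y_\infty)=1$. Hence $|\Rm(g(t))|\le a/t$ on $M\times(0,T]$ for some $a>0$. Since this bound has the form $a/t^{\theta}$ with $\theta=1<2$, the ``in particular'' assertion then follows by applying Theorem \ref{t-intro-2}.

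\emph{Main obstacle.} The hardest points are (a) establishing the lower curvature estimates of Theorem \ref{t-lowerbound-BK-1} with enough scale-invariance and locality to survive the blow-up and yield $\mathrm{BK}\ge 0$ on the ancient limit, and (b) the rigidity step for the ancient $\kappa$-noncollapsed \K-Ricci flow with $\mathrm{BK}\ge 0$, since the Perelman-style classification of $\kappa$-solutions available in dimension three has no direct analogue in higher-dimensional \K geometry. A secondary technical point is propagating the initial noncollapsing (ii) to a uniform $\kappa$ on all of $(0,T]$, given that condition (iii) degenerates at $t=0$.
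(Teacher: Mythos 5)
Your overall blow-up strategy is in the right family, but as written it has two genuine gaps at exactly the places where the paper's argument (following \cite{SimonTopping2016}) is most delicate. First, the way you transmit $\mathrm{BK}\ge 0$ to the ancient limit does not work: a standard Perelman-type point-picking only gives curvature control on backward parabolic cylinders around $(y_k,s_k)$ which do \emph{not} reach down to $t=0$, whereas Theorem \ref{t-lowerbound-BK-1} requires as hypotheses both $\mathrm{BK}(g(0))\ge 0$ on a time-zero ball and the bound $|\Rm(g(t))|\le a t^{-\theta}$ on that ball for \emph{all} $t$ down to $0$. So the assertion that the estimates of Theorem \ref{t-lowerbound-BK-1} ``pass to this limit'' is unsupported. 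This is precisely why the paper organizes the proof through the dichotomy of Lemma 4.2 of \cite{SimonTopping2016} (Case 1 / Case 2 in the proof of Theorem \ref{t-curvature-est-2}): the selected point $(z_0,t_0)$ comes with the bound $|\Rm|\le a_0/t$ on a controlled ball for all $t\in(0,\alpha^2 t_0]$, i.e.\ all the way back to time zero, which is what allows Theorem \ref{t-lowerbound-BK-1}, then Lemma \ref{l-curvest-1} and the pseudolocality-based improvement Lemma \ref{l-curvest-2} (this is where hypothesis (iii) is really used), to produce the contradiction. Likewise, your propagation of noncollapsing ``via Perelman's $\kappa$-noncollapsing theorem on each $[\tau,T]$, letting $\tau\to0^+$'' does not yield a uniform $\kappa$: the constant in that theorem depends on the geometry of the time-$\tau$ slice, which is uncontrolled as $\tau\to0$. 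The paper instead propagates the hypothesis (ii) volume bound forward in time using the Ricci lower bound supplied by Theorem \ref{t-lowerbound-BK-1} together with Lemma 2.3 of \cite{SimonTopping2016}.

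Second, your rigidity step -- that a complete noncompact ancient $\kappa$-noncollapsed K\"ahler--Ricci flow with $\mathrm{BK}\ge0$ and bounded curvature must be flat, deduced from the trace Li--Yau--Hamilton inequality plus Perelman's asymptotic soliton analysis -- is not a proof, and you yourself flag it as the main obstacle; there is no higher-dimensional K\"ahler analogue of the classification of $\kappa$-solutions, and your cited tools do not exclude nonflat candidates (for instance Cao's steady K\"ahler--Ricci soliton is an eternal, hence ancient, solution with positive bisectional curvature and bounded curvature). The mechanism actually used in the paper, inside Lemma \ref{l-curvest-1} following Lemma 2.1 of \cite{SimonTopping2016}, is different: the quantitative volume lower bounds at all scales force the blow-up limit to have maximal (Euclidean) volume growth, and then Ni's theorem \cite{Ni2005} -- a nonflat complete noncompact ancient solution with bounded nonnegative bisectional curvature cannot have maximal volume growth -- forces flatness. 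So you must either establish maximal volume growth of your limit (which again requires the Simon--Topping volume propagation rather than entropy noncollapsing) and invoke \cite{Ni2005}, or supply a genuinely new rigidity theorem. Your observation that hypothesis (iii) confines the blow-up to $t\to0^+$ is correct, and the final reduction of the ``in particular'' claim to Theorem \ref{t-intro-2} is fine, but the core of the argument is not closed as written.
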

The condition (iii) is needed because we have to use the pseudo-local theorem of Perelman \cite{Perelman-1}.

The paper is organized as follows: Theorem \ref{t-intro-1} will be proved in section 2, Theorem \ref{t-intro-2} will be proved in section 3, and Theorem \ref{t-intro-3} will be proved in section 3.

{\it Acknowledgement}: The authors would like to thank Hung-Hsi Wu for some useful discussions. They would also like to thank Albert Chau and Fei He for their interest in the work.

\section{short time existence of Ricci flow}
Recall the Ricci flow equation on  $(M^n,g_0)$:

\be\label{e-Ricciflow}
\left\{
  \begin{array}{ll}
   \displaystyle{ \frac{\p  g}{\p t}} = &-2\Ric(g) \hbox{\ \ in $M\times[0,T)$;} \\
    g(0)=  &g_0.
  \end{array}
\right.
\ee

In this section, we want to prove the following:

\begin{thm}\label{t-shortime}
For any $\a>0$, there is $1>\delta_0>0$ depending only on $\a, n$ such that if $(M^n,g_0)$ is a complete noncompact Riemannian manifold satisfying the following:
\begin{enumerate}
  \item [(a)] $\Ric(g_0)\ge -L^2$ for some constant $L\ge0$;
  \item [(b)] there is $1>r_0>0$ such that $V_{g_0}(x,r)\ge (1-\delta_0)\omega_n r^n$ for all $x\in M$ for all $r\le r_0$, where $\omega_n$ is the volume of the unit ball in $\R^n$ and $V_{g_0}(x,r)$ is the volume of geodesic ball of radius $r$ with center at $x$ with respect to $g_0$; and
  \item [(c)] the sectional curvature satisfies $K_{g_0}(x)\ge -\lf(\phi(r(x))\ri)^2$   where $r(x)$ is the distance function from a fixed point and $\phi:[0,\infty)\to \R$ is a smooth function so that $\phi(s)\ge C>0$ for some constant $C$, $\phi'\ge0$, $\phi(s+1)\le C\phi(s)$ for some $C>0$ for all $s$ large enough, and
      $$
      \int_0^\infty \frac{ds}{\phi(s)}=\infty.
      $$
\end{enumerate}
Then the Ricci flow \eqref{e-Ricciflow} has a complete solution on $M\times[0,\e_0^2)$ for some $\e_0$ depending only on $\a, n, L, \phi, r_0$ such that
\begin{enumerate}
  \item [(i)] the curvature of $g(t)$ satisfies:
  $$|\Rm(g(t))|\le \frac\a t;$$
  and
  \item [(ii)] $V_{g(t)}(x,\sqrt t)\ge Ct^\frac n2$ for some positive constant $C$ depending only on $\a, n, l, \phi, r_0$ for all $x\in M$ and $t\in [0,\e_0^2)$.
\end{enumerate}

\end{thm}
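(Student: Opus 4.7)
The plan is to adapt He's argument from \cite{He2016}, substituting the Tian--Wang pseudo-locality theorem (Theorem \ref{t-TW}) for Perelman's. The approach has three stages: (I) approximate $g_0$ by a family $g_k$ of complete metrics with bounded curvature that agree with $g_0$ on a $g_0$-geodesic exhaustion; (II) run Shi's Ricci flow from each $g_k$ and use pseudo-locality to extract a uniform bound $|\Rm(g_k(t))|\le\a/t$ up to a common existence time $\e_0^2>0$; (III) pass to a Hamilton--Cheeger--Gromov subsequential limit $g(t)$ on $M\times[0,\e_0^2)$ and verify the volume bound (ii).

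For Stage (I), I would use the sectional lower bound (c) together with the doubling and integrability conditions on $\phi$, the latter guaranteeing completeness of the radially-modified metrics, to construct $g_k$ satisfying $g_k\equiv g_0$ on $B_{g_0}(x_0,R_k)$ with $R_k\to\infty$, a uniform Ricci lower bound, and the almost-Euclidean volume property (b) preserved up to a slightly worse constant. The construction mirrors the cutoff smoothing used by He in \cite{He2016}, modified to accommodate the sectional-curvature growth allowed by $\phi$ in place of a global isoperimetric bound. Shi's theorem \cite{Shi1997} then supplies smooth solutions $g_k(t)$ with bounded curvature on $M\times[0,T_k)$ for some $T_k>0$.

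For Stage (II), the Tian--Wang pseudo-locality theorem provides $\delta_0=\delta_0(\a,n)$ and $\e_0=\e_0(\a,n,L,\phi,r_0)$ such that whenever $\Ric(g_k(0))\ge -L^2$ and $V_{g_k(0)}(x,r_0)\ge(1-\delta_0)\omega_n r_0^n$ at a point $x$, one has $|\Rm(g_k(t))|(y)\le \a/t$ for all $y$ in a neighborhood of $x$ and all $t\in(0,\e_0^2)$. Since the approximation from Stage (I) preserves these hypotheses on $B_{g_0}(x_0,R_k-1)$ uniformly in $k$, a standard doubling-time continuation argument together with Shi's interior estimates forces $T_k\ge\e_0^2$ and yields the uniform curvature bound $|\Rm(g_k(t))|\le\a/t$ on this exhausting region. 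Hamilton's compactness theorem, applied along a diagonal subsequence, then produces a complete smooth limit solution $g(t)$ on $M\times(0,\e_0^2)$ with $|\Rm(g(t))|\le\a/t$ and $g(t)\to g_0$ as $t\to 0^+$; continuity at $t=0$ follows from the distance distortion estimates under $|\Ric(g(t))|\le n\a/t$, which are integrable near $t=0$ in the relevant H\"older sense.

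For (ii), I would integrate $\bigl|\partial_t g(t)\bigr|_{g(t)}\le 2\a/t$ to obtain $g(t)\le (t/s)^{2\a} g(s)$ on $[s,t]$, compare with the volume bound for $g_0$ on $B_{g_0}(x,\sqrt s)$ as $s\to 0^+$, and combine with a Bishop--Gromov-type argument along the flow to recover $V_{g(t)}(x,\sqrt t)\ge C t^{n/2}$. The main obstacle is Stage (I): constructing the bounded-curvature approximants $g_k$ that simultaneously retain the Ricci lower bound and the near-Euclidean volume property at small scales uniformly in $k$, using only the weak sectional lower bound $K_{g_0}\ge -\phi(r)^2$, is the technical heart of the argument, and it is precisely for this that the full strength of the growth condition $\int_0^\infty ds/\phi(s)=\infty$ (together with the doubling $\phi(s+1)\le C\phi(s)$) is needed.
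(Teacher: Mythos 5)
Your high-level strategy (approximate, flow each approximant via Shi, get a uniform $\a/t$ bound from Tian--Wang pseudo-locality, pass to a Hamilton-type limit) is the same as the paper's, but the step you yourself flag as ``the technical heart'' --- Stage (I) --- is exactly where the paper's work lies, and your framing of it would not go through as stated. You propose complete bounded-curvature metrics $g_k$ on all of $M$ agreeing with $g_0$ on $B_{g_0}(x_0,R_k)$, with uniform Ricci lower bounds and the small-scale almost-Euclidean volume property everywhere; under the hypotheses (a)--(c) there is no known way to produce such global modifications, and the Hochard/Topping/He device exists precisely to avoid having to. What the paper actually does is conformally blow up $g_0$ near the boundary of the sublevel sets $U_k=\{\sigma<\sigma_k\}$ of an exhaustion function, setting $h_k=e^{2F_k}g_0$ on $U_k$ with $F_k=F(\sigma/\sigma_k)$ as in Lemma \ref{l-exhaustion-1}; each $(U_k,h_k)$ (not $M$) is then complete with bounded curvature, and Shi's flow is run on $U_k$. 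The exhaustion is $\sigma=\exp\bigl(\int_0^{\rho}ds/\phi(s)\bigr)$, where $\rho$ is the Greene--Wu smoothing of the distance furnished by Lemma \ref{l-distance-1} and Corollary \ref{c-distance-1}: the sectional lower bound $K\ge-\phi(r)^2$ gives $\mathrm{Hess}\,\rho\le C\phi(\rho)$, hence $|\nabla\sigma|\le C\sigma$ and $\mathrm{Hess}\,\sigma\le C\sigma$, which is what makes the Ricci curvature of $h_k$ uniformly bounded below; and $\int_0^\infty ds/\phi=\infty$ is used to make $\sigma$ a genuine exhaustion, not (as you suggest) to make the modified metrics complete --- completeness of $(U_k,h_k)$ comes from the logarithmic blow-up of $F$ at $\p U_k$.

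A second ingredient you omit is how the volume hypothesis survives the conformal distortion: near $\p U_k$ the metric $h_k$ is no longer $g_0$, and the paper uses Lemma \ref{l-exhaustion-1}(iii) to find, at each such point, a definite scale on which the conformal factor oscillates by at most $1+c_2\kappa$, so that $V_k(B_k(x,r))\ge(1+c_2\kappa)^{-n}(1-\delta_0)\omega_n r^n$; the constants $\kappa$ and $\delta_0$ are then chosen (depending only on $n,\a$ through the Tian--Wang $\delta$) so that this still meets the pseudo-locality threshold $(1-\delta)\omega_n r^n$. Without this bookkeeping your claim that the hypotheses are ``preserved up to a slightly worse constant'' has no content, since the worsening must be absorbed into the choice of $\delta_0$ in the statement of the theorem. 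Your Stages (II)--(III) (rescaling, doubling-time/continuation with Shi-type interior estimates, and a diagonal limit) match the paper's concluding step, which is likewise delegated to the arguments of He, Chen, Shi and Simon; but as written your proposal leaves the construction that makes those stages applicable unproved, so it is a plan rather than a proof.
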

Here are some examples of $\phi$ satisfying the condition in the theorem .   $$\phi(s)=s(\log s)(\log^{(2)}s)\cdots(\log ^{(k)}s)$$ for $s$ large enough. Here $\log^{(k)}s=\log(\log^{(k-1)}s)$.

\begin{cor}\label{c-uniformization} There is $\delta_0>0$ depending only on $m$ such that the following is true: Suppose $(M^m,g)$ is a complete noncompact \K manifold with complex dimension $m$, nonnegative bisectional curvature and with maximum volume growth such that $V_g(x,r_0)\ge (1-\delta_0)r_0^{2m}\omega_{2m}$ for some $r_0>0$  for all $x$ and the sectional curvature of $M$ satisfies $K_g(x)\ge - \lf(\phi(r(x))\ri)^2$ where $\phi(s)$ is as in Theorem \ref{t-shortime} and $r(x)$ is the distance to a fixed point. Then $M$ is biholomorphic to $\C^m$.
\end{cor}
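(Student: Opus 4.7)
The plan is to use Theorem \ref{t-shortime} to produce a short time Ricci flow starting from $g$, preserve nonnegativity of bisectional curvature via \cite{HuangTam2015}, and then apply the Chau-Tam uniformization theorem \cite{ChauTam2008} at a positive time. Pick $\alpha_0 = \alpha_0(m) > 0$ small enough that any complete Ricci flow starting from a \K metric with nonnegative bisectional curvature and satisfying $|\Rm(g(t))| \leq \alpha_0/t$ remains \K with nonnegative bisectional curvature by \cite{HuangTam2015}. Applying Theorem \ref{t-shortime} with $\alpha = \alpha_0$ and real dimension $n = 2m$ produces a constant $\delta_0 = \delta_0(\alpha_0, 2m)$ depending only on $m$; this will be the $\delta_0$ asserted in the corollary.

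Next, verify the hypotheses of Theorem \ref{t-shortime} for $(M, g)$: (a) holds with $L = 0$ since nonnegative bisectional curvature gives $\Ric(g) \geq 0$; (b) holds at every $r \leq r_0$ by combining the assumed volume lower bound at scale $r_0$ with the Bishop-Gromov volume comparison available under $\Ric(g) \geq 0$; and (c) is precisely the sectional curvature hypothesis of the corollary. Theorem \ref{t-shortime} then produces a complete Ricci flow $g(t)$ on $M \times [0, \epsilon_0^2)$ with $|\Rm(g(t))| \leq \alpha_0/t$, and \cite{HuangTam2015} makes $g(t)$ \K with nonnegative bisectional curvature for all $t \in (0, \epsilon_0^2)$. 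Fix any $t_0 \in (0, \epsilon_0^2)$: then $(M, g(t_0))$ is a complete noncompact \K manifold of complex dimension $m$ with nonnegative bisectional curvature and bounded curvature. Since the Ricci flow preserves the underlying complex manifold, it suffices to show $(M, g(t_0))$ is biholomorphic to $\C^m$, which will follow from \cite{ChauTam2008} provided $(M, g(t_0))$ still has maximum volume growth.

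The only delicate step is the preservation of maximum volume growth at time $t_0$. Since $\Ric(g(t)) \geq 0$ along the flow, $\partial_t g = -2\Ric \leq 0$, so $g(t) \leq g(0)$ pointwise and therefore $B_{g(0)}(x, r) \subseteq B_{g(t)}(x, r)$ for every $x$ and $r$. Combining this containment with conclusion (ii) of Theorem \ref{t-shortime} and the monotonicity of the asymptotic volume ratio under the \KR flow with nonnegative bisectional curvature (as in Ni's monotonicity formula), one concludes that $(M, g(t_0))$ has positive asymptotic volume ratio, hence maximum volume growth. Granting this, the Chau-Tam theorem completes the proof. The main obstacle is controlling the asymptotic volume ratio under a flow whose curvature bound $\alpha_0/t$ is singular at $t = 0$, which prevents any naive volume-distortion estimate integrated from $t = 0$.
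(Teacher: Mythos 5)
Your overall route is the same as the paper's: choose $\a=\a(m)$ small so that \cite{HuangTam2015} preserves K\"ahlerity and nonnegative bisectional curvature under the bound $|\Rm(g(t))|\le \a/t$, take $\delta_0$ from Theorem \ref{t-shortime} for this $\a$ (your verification of hypotheses (a)--(c), including the Bishop--Gromov reduction of the volume hypothesis to all scales $r\le r_0$, is correct and slightly more careful than the paper), and then apply \cite{ChauTam2008} at a positive time $t_0$. The one step where your argument has a genuine gap is exactly the one you flag yourself: the preservation of maximal volume growth. The appeal to ``monotonicity of the asymptotic volume ratio'' (Hamilton/Ni/Yokota-type statements, e.g.\ related to \cite{Ni2005}) requires a complete solution with \emph{bounded} curvature on the whole time interval, which you do not have: the only bound available is $\a/t$, singular at $t=0$. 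Applying such a result on $[\tau,t_0]$ with $\tau>0$ only shifts the problem to a lower bound on the asymptotic volume ratio of $g(\tau)$ that is uniform as $\tau\to 0$, which is precisely what needs to be proved. So as written the volume-growth step is not closed, and since Chau--Tam needs maximal volume growth at time $t_0$, the proof is incomplete at this point.

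The paper closes this step simply by citing \cite{He2016}, and the argument there uses exactly the ingredients you already assembled, but combined through a covering/packing argument at the fixed time $t_0$ rather than through any monotonicity across $t=0$: since $\Ric(g(t))\ge 0$ one has $d_{t_0}\le d_0$, hence $B_{g(0)}(x,r)\subset B_{t_0}(x,r)$; the curvature bound $\a/t$ gives the distance distortion estimate $d_0\le d_{t_0}+C\sqrt{\a t_0}$; conclusion (ii) of Theorem \ref{t-shortime} gives the non-collapsing bound $V_{t_0}(x,\sqrt{t_0})\ge C t_0^{m}$ at every point; and Bishop's inequality at $t=0$ (where $\Ric\ge0$) gives $V_0(x,\rho)\le\omega_{2m}\rho^{2m}$. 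Taking a maximal $2\sqrt{t_0}$-separated (in $g(t_0)$) set in $B_{g(0)}(x_0,r)$, the associated $g(t_0)$-balls of radius $2\sqrt{t_0}$ cover $B_{g(0)}(x_0,r)$ and each lies in a $g(0)$-ball of radius comparable to $\sqrt{t_0}$, so the number of points is at least $c\,V_0(x_0,r)t_0^{-m}\ge c\,\kappa r^{2m}t_0^{-m}$ with $\kappa$ the initial asymptotic volume ratio; the disjoint $g(t_0)$-balls of radius $\sqrt{t_0}$ then force $V_{t_0}(x_0,r+\sqrt{t_0})\ge c'\kappa r^{2m}$, i.e.\ maximal volume growth at time $t_0$. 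If you replace your appeal to asymptotic-volume-ratio monotonicity by this argument (or by the citation to \cite{He2016}, as the paper does), your proof is complete.
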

\begin{proof}  Let $\a(m)>0$ to be chosen. Let $\delta_0>0$ be the constant in Theorem \ref{t-shortime} for such  $\a>0$ and $m$. By the theorem, if $g_0$ satisfies the conditions in the corollary, then the Ricci flow has a complete short time solution $g(t)$ with curvature $|\Rm(g(t))|\le \a t^{-1}$. By \cite{HuangTam2015}, $g(t)$ is K\"ahler.  We can choose $\a>0$ small enough depending only on $n$ so that $g(t)$ also has nonnegative bisectional curvature by \cite{HuangTam2015}. Moreover, one can prove that $g(t)$ has maximum volume growth, see \cite{He2016}. Hence $M$ is biholomorphic to $\C^m$ by \cite{ChauTam2008}.

\end{proof}


Before we prove Theorem \ref{t-shortime}, we need some basic facts.
As in \cite{He2016,Hochard2016,Topping2010},
for $0<\kappa<1$, let $f:[0,1)\to[0,\infty)$ be the function:
\be\label{e-exh-1}
 f(s)=\left\{
  \begin{array}{ll}
    0, & \hbox{$s\in[0,1-\kappa]$;} \\
    -\displaystyle{\log \lf[1-\lf(\frac{ s-1+\kappa}{\kappa}\ri)^2\ri]}, & \hbox{$s\in (1-\kappa,1)$.}
  \end{array}
\right.
\ee
Let   $\varphi\ge0$ be a smooth function on $\R$ such that $\varphi(s)=0$ if $s\le 1-\kappa+\kappa^2 $, $\varphi(s)=1$ for $s\ge 1-\kappa+2 \kappa^2 $
\be\label{e-exh-2}
 \varphi(s)=\left\{
  \begin{array}{ll}
    0, & \hbox{$s\in[0,1-\kappa+\kappa^2]$;} \\
    1, & \hbox{$s\in (1-\kappa+2\kappa^2,1)$.}
  \end{array}
\right.
\ee
such that $\displaystyle{\frac2{ \kappa^2}}\ge\varphi'\ge0$. Define
 $$F(s):=\int_0^s\varphi(\tau)f'(\tau)d\tau.$$
 The following has been proved by He \cite{He2016}, see also \cite{Hochard2016,Topping2010}.

\begin{lma}\label{l-exhaustion-1} Suppose   $0<\kappa<\frac18$. Then the function $F\ge0$ defined above is smooth and satisfies the following:
\begin{enumerate}
  \item [(i)] $F(s)=0$ for $0\le s\le 1-\kappa+\kappa^2$.
  \item [(ii)] There exists an absolute constant $c_1>0$ such that $$0\le e^{-F(s)}F'(s)\le\frac {c_1}\kappa;\ \   0\le e^{-2F}F''(s)\le \frac{c_1}{\kappa^2}.$$
  \item [(iii)]  For any $ 1-2\kappa <s<1$, there is $\tau>0$ with $0<s -\tau<s +\tau<1$ such that
 \bee
 1\le \exp(F(s+\tau)-F(s-\tau))\le (1+c_2\kappa);\ \ \tau\exp(F(s_0-\tau))\ge c_3\kappa^2
 \eee
  for some absolute constants  $c_2>0, c_3>0$.
\end{enumerate}

\end{lma}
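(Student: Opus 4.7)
The plan is a direct computation from the explicit forms of $f$ and $\varphi$. Setting $u(s) = 1-\lf(\frac{s-1+\kappa}{\kappa}\ri)^2 = (1-s)(s-1+2\kappa)/\kappa^2$ on $(1-\kappa,1)$, one has $e^{-f} = u$, $f' = -u'/u$, and $-u'(s) = 2(s-1+\kappa)/\kappa^2$. Part (i) is immediate: $\varphi \equiv 0$ on $[0,1-\kappa+\kappa^2]$, so the integrand defining $F$ vanishes and $F\equiv 0$ there.

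For (ii), on $[1-\kappa+2\kappa^2,1)$ one has $\varphi\equiv 1$, so $F = f + c$ for a constant $c$ with $|c|=O(\kappa^2)$ (obtained by integrating $(\varphi-1)f'$ over the transition interval of length $\kappa^2$, where $u\ge 15/16$ and hence $|f'|=O(1)$). The explicit bounds $e^{-f}f' = -u' = O(1/\kappa)$ and $e^{-2f}f'' = 2u/\kappa^2 + (u')^2 = O(1/\kappa^2)$ therefore transfer to $F$ up to the factor $e^c = 1+O(\kappa^2)$. On the transition region $[1-\kappa+\kappa^2,1-\kappa+2\kappa^2]$ itself, $|f'|=O(1)$, $|f''|=O(1/\kappa^2)$, and $\varphi'\le 2/\kappa^2$, so the identities $F' = \varphi f'$ and $F'' = \varphi'f'+\varphi f''$ directly give the required $O(1/\kappa)$ and $O(1/\kappa^2)$ estimates (using $F\ge 0$, hence $e^{-F}\le 1$).

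For (iii), I split into two cases. When $s\in(1-2\kappa,1-\kappa+2\kappa^2]$, take $\tau=\kappa^2$: then $s-\tau\le 1-\kappa+\kappa^2$ forces $F(s-\tau)=0$, while $F(s+\tau)=O(\kappa^2)$ by integrating $|f'|=O(1)$ over a length-at-most-$2\kappa^2$ interval. Thus $\exp(F(s+\tau)-F(s-\tau))\le 1+c\kappa^2\le 1+c_2\kappa$ and $\tau e^{F(s-\tau)}=\kappa^2$. When $s\in(1-\kappa+2\kappa^2,1)$, set $\delta=1-s$ and $\tau=c_0\kappa\delta$ for a small absolute $c_0$; then $\tau<\delta/2$ and the factored form $u(s\pm\tau)=(\delta\mp\tau)(2\kappa-\delta\pm\tau)/\kappa^2$ yields
\bee F(s+\tau)-F(s-\tau) \le f(s+\tau)-f(s-\tau) = \log\frac{u(s-\tau)}{u(s+\tau)} \le \log\lf(1+\frac{4\tau}{\delta}\ri) \le 4c_0\kappa. \eee
For the lower bound on $\tau e^{F(s-\tau)}$: if $s-\tau\ge 1-\kappa+2\kappa^2$, then $u(s-\tau)\le 4\delta/\kappa$ gives $e^{F(s-\tau)}\ge e^c\kappa/(4\delta)$ and hence $\tau e^{F(s-\tau)}\ge c_0 e^c\kappa^2/4$; otherwise $\delta\gtrsim\kappa$, so $\tau=c_0\kappa\delta\gtrsim c_0\kappa^2$ and the trivial bound $F\ge 0$ suffices.

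The main technical point is keeping constants consistent across the transition region of $\varphi$ and verifying that all estimates hold uniformly for $\kappa<1/8$; no conceptual difficulty arises beyond this bookkeeping.
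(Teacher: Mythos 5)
Your proof is correct and follows essentially the same route as the paper: part (ii) by direct computation from the explicit formulas (with the transition-region correction controlled exactly as in the paper's inequality \eqref{e-phi}), and part (iii) by the same two-case split with $\tau=\kappa^2$ near the threshold. The only difference is cosmetic: in the far region you take the explicit choice $\tau=c_0\kappa(1-s)$ and estimate the factored form of $u$, whereas the paper selects $\tau$ implicitly by an intermediate-value argument so that $\exp(F(s+\tau)-F(s-\tau))=1+\tfrac23\kappa$; both yield the same absolute constants up to bookkeeping.
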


\begin{proof} We sketch the proof here. The fact that $F$ is smooth, $F\ge0$ and satisfies (i), (ii) follows by   direct computations and the fact that
\be\label{e-phi}
\begin{split}
1\le  \exp\lf(\int_0^s(1-\varphi(\tau))f'(\tau)d\tau\ri) \le \frac{1-\kappa^2}{1-4\kappa^2}.
\end{split}
\ee

To prove (iii), let
 $$
 B:=\exp(F(s+\tau)-F(s-\tau))
 $$
 and
 $$
 A:=\tau \exp(F(s-\tau))=\tau B^{-1}\exp(F(s+\tau))\ge \tau B^{-1}.
 $$

 Suppose $1-2\kappa\le s\le 1-\frac23\kappa$, let $\tau=\kappa^2$. Then $1>s+\tau>s-\tau>0$ and
\bee
\begin{split}
F(s+\tau)-F(s-\tau)= \int_{s-\tau}^{s+\tau}F'(t)dt
\le  2\tau F'(s+\tau)
\le  \frac{4\kappa(\frac13+\kappa)}{1-(\frac13+\kappa)^2}
\le 4\kappa.
\end{split}
\eee
So
$
B\le \exp(4\kappa)\le 1+8\kappa,
$
and
$
A\ge\kappa^2 B^{-1}\ge \kappa^2(1+8\kappa)^{-1}.
$
Hence in this case, (iii) is true by \eqref{e-phi}.

Suppose $1>s\ge 1-\frac23\kappa$, then for $0\le \tau\le \frac12(1-s)$, we have
 $s +\tau<1$ and
  $s -\tau\ge \frac12 (3s-1) \ge 1-\kappa>0.
  $

\bee
\begin{split}
B=  &1+\displaystyle{\frac{ 4\tau\kappa^{-2}(s-1+\kappa) }
{ 1-\lf(\frac{  s+\tau -1+\kappa}{\kappa}\ri)^2 }}
:= 1+\psi(\tau).\\
\end{split}
\eee
$\psi$ is continuous on $[0,\frac12(1-s)]$,
with
$
\psi(0)=0,
$
and
$$
\psi(\frac12(1-s))=\frac{4(\kappa-1+s)}{2\kappa-\frac12(1-s) }>\frac23\kappa.
$$
Hence there is $0<\tau<\frac12(1-s)$ such that
$
\psi(\tau)= \frac23\kappa,
$
and so
 $B=1+\frac23\kappa.$
On the other hand,
\bee
\begin{split}
A= \frac{\tau B^{-1}}{ 1-\lf(\frac{  s+\tau -1+\kappa}{\kappa}\ri)^2 }
= \displaystyle{\frac{ B^{-1}\psi(\tau) } {4  \kappa^{-2}(s-1+\kappa) }}
\ge  \frac1{6(1+\frac23\kappa)}\kappa^2.
\end{split}
\eee
Hence (iii) is also true.
\end{proof}

Let $(M^n,g)$ be a complete noncompact Riemannian manifold. Let $\eta:M\to \R$ be a continuous function. Following \cite{GreeneWu1979},
a continuous function $f$ on $M$  is said to be {\it $\eta$-convex} if  for any $x\in M$, the function $f$ is $\eta(x)$-convex at $x$ in the sense that there is a neighborhood $U$ of $x$ and $\delta>0$  such  that $F(y)=f(y)-\frac12(\eta(x)+\delta)d^2(x,y)$ is a convex function in $U$ where $r(x,y)$ is the distance between $x, y$. Namely, for any geodesic $\gamma:[-\e,\e]\to U$, $2F(\gamma(0))\le F(\gamma(\e))+F(\gamma(-\e))$. It is easy to see that if $\eta_1\ge \eta_2$, then $f$ is $\eta_2$-convex implies $f$ is $\eta_1$ convex because $d^2(x,y)$ is a convex function near $x$.

\begin{lma}\label{l-distance-1}
Let $(M^n,g)$ be a complete noncompact Riemannian manifold. Let $\phi$ be the function as in Theorem \ref{t-shortime}. Suppose the sectional curvature of $g$ satisfies $K_g(x)\ge -[\phi(r(x))]^2$  where $r(x)=r(x,x_0)$ is the distance function from a fixed point $x_0$. Then $-r(x)$ is $\eta$-convex where $\eta(x)=-C_1 \phi(r(x)) $  outside a compact set for some positive constant $C_1$.
\end{lma}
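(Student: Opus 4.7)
The strategy is to combine the Hessian comparison theorem (using only the lower sectional curvature bound) with the Greene--Wu support-function trick for handling the cut locus of $x_0$. For $R := r(x)$ large, I aim to show that on a small ball $U = B(x, \epsilon_0)$ contained in a normal neighborhood of $x$, and for a suitable $\delta > 0$, the function
\[
F(z) := -r(z) + \tfrac{1}{2}\bigl(C_1 \phi(R) - \delta\bigr)\, d^2(x, z)
\]
admits, at every $y \in U$, a smooth local support from below with nonnegative Hessian at $y$. By the standard characterization of geodesic convexity via smooth supports, this gives that $F$ is convex on $U$, which is precisely the $\eta$-convexity of $-r$ at $x$ with $\eta(x) = -C_1 \phi(R)$. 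The constant $C_1$ will depend only on the doubling constant in $\phi(s+1) \le C\phi(s)$ and on the lower bound $\phi \ge C > 0$.

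Given $y \in U$, choose $x_\epsilon$ close to $x_0$ on a minimizing geodesic from $x_0$ to $y$ so that $y$ avoids the cut locus of $x_\epsilon$, and set $r_\epsilon(z) := d(x_0, x_\epsilon) + d(x_\epsilon, z)$. Then $r_\epsilon$ is smooth near $y$, $r_\epsilon \ge r$ with equality at $y$, hence $-r_\epsilon \le -r$ with equality at $y$. On $U$ with $\epsilon_0 \le 1$, the triangle inequality gives $r(z) \le R + 1$, so by monotonicity of $\phi$ and the growth condition, $K_g \ge -\Lambda^2$ on $U$ with $\Lambda := \phi(R+1) \le C\phi(R)$. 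Hessian comparison applied to $d(x_\epsilon, \cdot)$ then yields
\[
\nabla^2 r_\epsilon(y) \le \Lambda \coth\bigl(\Lambda\, d(x_\epsilon, y)\bigr)\, g(y),
\]
so $\nabla^2(-r_\epsilon)(y) \ge -\Lambda \coth\bigl(\Lambda\, d(x_\epsilon, y)\bigr)\, g(y)$. Since $\phi \ge C > 0$ and $R$ is large, $\Lambda\, d(x_\epsilon, y)$ is bounded below by a positive absolute constant, so $\coth \le C'$ for a universal $C'$; together with $\Lambda \le C\phi(R)$ this yields $\nabla^2(-r_\epsilon)(y) \ge -C_0 \phi(R)\, g(y)$ with $C_0 = CC'$ independent of $y \in U$.

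For the quadratic correction, in normal coordinates centered at $x$ the function $\tfrac{1}{2}\, d^2(x, \cdot)$ coincides with $\tfrac{1}{2}|y|^2$, and a direct computation using $\Gamma^k_{ij}(x) = 0$ and $\Gamma^k_{ij}(y) = O(|y|)$ gives
\[
\bigl(\nabla^2 \bigl(\tfrac{1}{2}\, d^2(x, \cdot)\bigr)\bigr)_{ij}(y) = \delta_{ij} - \Gamma^k_{ij}(y)\, y^k = g_{ij}(y) + O\bigl(|\Rm|(x) \cdot |y|^2\bigr).
\]
Choosing $\epsilon_0$ small (depending on $|\Rm|(x)$, which is finite at the fixed point $x$) then ensures $\nabla^2(\tfrac{1}{2}\, d^2(x, \cdot)) \ge \tfrac{1}{2}\, g$ on $U$. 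Hence $h_y(z) := -r_\epsilon(z) + \tfrac{1}{2}(C_1 \phi(R) - \delta)\, d^2(x, z)$ supports $F$ from below at $y$ and
\[
\nabla^2 h_y(y) \ge \Bigl[-C_0 \phi(R) + \tfrac{1}{2}\bigl(C_1 \phi(R) - \delta\bigr)\Bigr]\, g(y),
\]
which is nonnegative after choosing $C_1 > 2C_0$ and $\delta$ small. The main technical obstacle is the joint handling of the cut locus of $x_0$ (via the barrier $r_\epsilon$) and the smallness of $\epsilon_0$ needed to make the Hessian of $\tfrac{1}{2}\, d^2(x, \cdot)$ close to $g$ without an \emph{upper} sectional curvature bound; the doubling assumption $\phi(s+1) \le C\phi(s)$ is essential so that the effective curvature scale $\Lambda$ on $U$ remains comparable to $\phi(R)$, yielding $\eta(x) \propto \phi(R)$.
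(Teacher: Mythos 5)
Your argument is correct and lives in the same circle of ideas as the paper's (Calabi support functions plus Hessian comparison), but it is organized differently. The paper splits into two cases: when $x$ is not a cut point of $x_0$ it applies Hessian comparison directly to $r$ near $x$; when $x$ is a cut point it argues by contradiction, taking a sequence of short geodesics violating the midpoint inequality for $F$ and building supports $-d(\a_i(\e),\cdot)-\e$ only along that sequence. You instead build a smooth lower support $h_y(z)=-\bigl(d(x_0,x_\e)+d(x_\e,z)\bigr)+\tfrac12\bigl(C_1\phi(R)-\delta\bigr)d^2(x,z)$ at \emph{every} $y$ in a fixed small ball $U$ about $x$, check its Hessian is strictly positive at the contact point, and invoke the standard fact that a continuous function admitting such supports at every point is geodesically convex; this removes the case distinction and the limiting argument (the case analysis is, in effect, absorbed into the support characterization, which is proved by the same one-dimensional maximum-principle argument the paper runs by hand), and it also makes explicit the estimate $\nabla^2\bigl(\tfrac12 d^2(x,\cdot)\bigr)\ge\tfrac12 g$ on $U$ that the paper leaves implicit in ``$F_i$ is convex in $U$ provided $U$ is small enough.'' The uniformity you need is exactly right: $C_1$ depends only on the doubling constant of $\phi$, the lower bound $\phi\ge C>0$ (through $\coth$), and dimension, while $U$, $\e_0$, $\delta$ may depend on $x$, which the definition of $\eta$-convexity permits.

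One imprecision to fix: the Hessian comparison for $d(x_\e,\cdot)$ at $y$ requires the sectional curvature lower bound along the whole minimal geodesic from $x_\e$ to $y$, which leaves $U$ and sweeps out essentially $B(x_0,R+2)$; the bound ``$K_g\ge-\Lambda^2$ on $U$'' is not what is used. The repair is immediate with the tools you already cite: on $B(x_0,R+2)$ one has $K_g\ge-\phi(R+2)^2$ by monotonicity of $\phi$, and $\phi(R+2)\le C^2\phi(R)$ by the doubling hypothesis, so the conclusion $\nabla^2(-r_\e)(y)\ge-C_0\phi(R)\,g(y)$ stands with an adjusted constant, and the rest of your proof goes through unchanged.
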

\begin{proof} This is standard.  We sketch the proof as follows. Suppose $r(x,x_0)\ge 1$. Let $r(x,y)$ be the distance between $x, y$ and  $R=r(x,x_0)$. Then $K_g(y)\ge -2(\phi(R))^2$ for  $y\in B(x_0, R+\delta)$ for some $\delta>0$. There is constant $C_2$ independent of $x$ such that if $z$ is close to $x_0$ then any point $y$ within the cut locus of $z$ satisfies $\text{\rm Hess}(r(z,\cdot))\le C_2 \phi(R)g$ at $y$.

 Suppose $x$ is within the cut locus of $x_0$, then $\text{\rm Hess}(r (y))\le C_2\phi(R)g(y)$ for some constant $C_2$ for all $y$ in a neighborhood of $x$.
Hence $-r(y)$ is $-C_2 \phi(R)$-convex at $x$.

If $x$ is a cut point of $x_0$, and    Suppose $-r(y)$ is not $-C_3\phi(R)$-convex at $x$ with $C_3=C_2+1$. Let $$F(y)=-r(y)-\frac12(-(C_2+\frac12)\phi(R)) r^2(x,y) =-r(y)+\frac12(C_2+\frac12)\phi(R)r^2(x,y).$$
 Then there exist geodesics $\gamma_i:[-\rho_i,\rho_i]$  parametrized by arc length with  $\gamma_i(0)=x_i\to x$, $\rho_i\to0$ but
\be\label{e-distance-1}
2F(\gamma_i(0))=2F(x_i)> F(\gamma_i(-\rho_i))+F(\gamma(\rho_i)).
\ee
Let $0<\e<<1$, and let $\a_i$ be minimal geodesics from $x_0$ to $x_i$. We may assume that $\a_i(\e)\to \a(\e)$ where $\a$ is a minimal geodesic from $x_0$ to $x$. Then $x_0$ is within the cut locus of  $\a(\e)$. Hence  there are open sets $U$ of $x_0$ and $V$ of $\a(\e)$ such that every point of $U$ is within the cut locus of every point of $V$.  In particular every point of $U$ is within the cut locus of $\a_i(\e)$ for $i$ large enough. Consider the function $F_i(y)=-r(\a_i(\e),y)-\e+ \frac12(C_2+\frac12)\phi(R) r^2(x, y)$. This is convex in $U$ provided $U$ is small enough. Then for $i$ large enough.
$$
2F(x_i)=2F_i(x_i)\le F_i(\gamma_i(-\rho_i))+F_i(\gamma(\rho_i))\le F(\gamma_i(-\rho_i))+F(\gamma_i(\rho_i)).
$$
This contradicts \eqref{e-distance-1}.

\end{proof}

By the result of Greene-Wu \cite[Propositions 2.2, 2.3]{GreeneWu1979}, we have:
\begin{cor}\label{c-distance-1}
Let $(M^n,g)$ be a complete noncompact Riemannian manifold. Suppose the sectional curvature of $g$ satisfies $K_g(x)\ge -(\phi(r(x))^2 $ where $\phi(s)$ is a function as in Theorem \ref{t-shortime}, where $r(x)$ is the distance function from a fixed point $x_0$. For any $\e>0$ there is a smooth function $\rho(x)$ such that (i) $|\rho(x)-d(x)|\le \e$ outside a compact set; (ii) $|\nabla\rho|\le 2$; (iii) ${\rm Hess}(\rho)(x)\le C' \phi(\rho(x)+\e)g$ for some positive constant $C'>0$ outside a compact set.
\end{cor}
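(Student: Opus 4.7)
The plan is to apply the Greene--Wu smoothing procedure \cite[Propositions 2.2, 2.3]{GreeneWu1979} directly to the distance function $r(x)$, using as input the $\eta$-convexity established in Lemma \ref{l-distance-1}. That lemma shows that outside a compact subset of $M$, the function $-r$ is $\eta$-convex with $\eta(x) = -C_1\phi(r(x))$; equivalently, in the generalized sense of \cite{GreeneWu1979}, $\mathrm{Hess}(r)(x) \le C_1\phi(r(x))g$ outside a compact set.

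The Greene--Wu smoothing theorem produces, for each $\epsilon>0$ and each continuous positive function $\sigma$ on $M$, a smooth function $\rho$ with $|\rho-r|<\epsilon$ outside a compact set and with the one-sided Hessian bound of $r$ preserved up to the error $\sigma$. Concretely, applied to $-r$ and the output negated, this yields $\mathrm{Hess}(\rho)(x) \le \bigl(C_1\phi(r(x)) + \sigma(x)\bigr)g$ outside a compact set. Taking $\sigma\equiv 1$ and using the uniform lower bound $\phi\ge C>0$ from the hypotheses on $\phi$ in Theorem \ref{t-shortime}, this gives $\mathrm{Hess}(\rho) \le C_1'\phi(r(x))g$ for some constant $C_1'>0$. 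Moreover, since $r$ is $1$-Lipschitz, the Greene--Wu construction (convolution at a small locally-chosen scale) yields $|\nabla\rho|\le 2$ provided the smoothing scale is taken small enough, which is permitted within the flexibility of the procedure.

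To replace $\phi(r(x))$ by $\phi(\rho(x)+\epsilon)$ on the right-hand side, I use $|\rho-r|\le\epsilon$ outside a compact set together with the monotonicity $\phi'\ge 0$: then $r(x)\le\rho(x)+\epsilon$, so $\phi(r(x))\le \phi(\rho(x)+\epsilon)$, and the desired estimate $\mathrm{Hess}(\rho)\le C'\phi(\rho(x)+\epsilon)g$ follows with $C'=C_1'$.

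The only subtlety is to ensure that the Greene--Wu procedure, whose standard form preserves lower Hessian bounds for convex functions, produces the required upper bound on $\mathrm{Hess}(\rho)$ here; this is precisely the content of the approximation-from-above variant, obtained by applying the theorem to $-r$ and negating, as in \cite[Proposition 2.3]{GreeneWu1979}. Once that is set up correctly, everything else is a book-keeping of constants using $\phi\ge C>0$ and $\phi'\ge 0$.
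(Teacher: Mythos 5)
Your proposal is correct and takes essentially the same route as the paper, which proves the corollary simply by invoking Greene--Wu \cite[Propositions 2.2, 2.3]{GreeneWu1979} applied to the $\eta$-convexity of $-r$ established in Lemma \ref{l-distance-1}. Your extra bookkeeping --- absorbing the smoothing error using $\phi\ge C>0$, passing from $\phi(r(x))$ to $\phi(\rho(x)+\epsilon)$ via $|\rho-r|\le\epsilon$ and $\phi'\ge 0$, and deducing $|\nabla\rho|\le 2$ from the $1$-Lipschitz property of $r$ --- is exactly the filling-in of details the paper leaves implicit.
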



To prove Theorem \ref{t-shortime} we use the idea of He \cite{He2016}. In our case, instead of using the pseudo-locality theorem of Perelman \cite{Perelman-1} (see also \cite{cty}) as by He \cite{He2016},
we will use the following pseudo-locality theorem by Tian-Wang \cite{TW}:
\begin{thm}\label{t-TW}
For $\alpha>0$, there exist  $\delta >0$,  $\e_0>0$ and $C>0$ depending only on $n, \a$ such that the following holds:

Suppose $(M^n,g(t)),\,t\in [0,T)$ is a complete Ricci flow with bounded curvature, $p\in M$. Suppose
$ V_{g(0)}(B_{g(0)}(p,1)
)\geq (1-\delta) \omega_n $,
and
$ \Ric(g(0)) \geq -\delta^2 g(0)\quad\text{on}\;\;B_0(p,1)$. Then

$$|\Rm|(p,t)\leq \frac{\alpha}{t};  {\text\  and}\ \
 V_{g(t)}(B_{g(t)}(p,\sqrt{t}))\ge Ct^\frac{n}{2}
$$
for $t\in (0,T)\cap (0,\e_0^2)$.
\end{thm}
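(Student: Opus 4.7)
The plan is to prove Theorem~\ref{t-TW} by contradiction and blow-up, using Perelman's reduced-volume monotonicity together with Cheeger--Colding almost-rigidity under a Ricci lower bound. Suppose the conclusion fails: then for some fixed $\a>0$ there exist sequences $\d_i\to 0$ and complete bounded-curvature Ricci flows $(M_i^n,g_i(t))$ on $[0,T_i)$ with basepoints $p_i$, satisfying $V_{g_i(0)}(B_{g_i(0)}(p_i,1))\ge (1-\d_i)\omega_n$ and $\Ric(g_i(0))\ge -\d_i^2 g_i(0)$ on $B_0(p_i,1)$, yet with either $|\Rm|(p_i,t_i)> \a/t_i$ at some $t_i\in (0,T_i)\cap (0,1/i)$, or the stated volume lower bound failing.

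I would then apply a Perelman-style parabolic point-picking in a shrinking neighbourhood of $(p_i,t_i)$ to produce points $(q_i,s_i)$ at which $Q_i:=|\Rm|(q_i,s_i)$ is essentially maximal, and rescale parabolically by $Q_i$ to obtain blow-ups $\wt g_i$ with $|\Rm|(q_i,0)=1$ and uniform curvature bounds on long backward parabolic neighbourhoods. Crucially, the rescaled initial slices, which now sit at time $-Q_i s_i\to -\infty$, inherit the almost-Euclidean volume and almost-nonnegative Ricci conditions on balls of diverging radius.

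By Cheeger--Colding almost-rigidity, these rescaled initial slices are Gromov--Hausdorff close to Euclidean balls. Inserting this into the definition of Perelman's reduced volume $\wt V$ based at $(q_i,0)$, one sees that $\wt V(\tau)\to(4\pi)^{n/2}$ as $\tau\uparrow Q_i s_i$. Monotonicity of $\wt V$ along the flow then forces it to be close to the Euclidean value $(4\pi)^{n/2}$ on every compact interval ending at $0$, and with $\kappa$-noncollapsing (itself produced from the almost-Euclidean volume on the initial slice) one extracts a smooth pointed Cheeger--Gromov limit $(M_\infty,g_\infty(t),q_\infty)$, a complete ancient Ricci flow with $|\Rm|(q_\infty,0)=1$ but reduced volume identically $(4\pi)^{n/2}$. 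Perelman's rigidity for maximal reduced volume then forces the limit to be the Gaussian shrinker on flat $\R^n$, contradicting $|\Rm|(q_\infty,0)=1$. The volume conclusion of the theorem follows from $\kappa$-noncollapsing applied with the curvature bound $|\Rm|\le \a/t$ just established.

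The main obstacle is translating the almost-Euclidean-volume plus Ricci lower bound on the initial slice into an effective almost-maximal reduced volume centred at the later blow-up point $(q_i,s_i)$: this requires quantitative Cheeger--Colding up to the parabolic scale selected by $Q_i$, together with good $L$-geometry estimates for curves that dip close to the rescaled initial slice, and is precisely where the argument departs from Perelman's isoperimetric-based pseudolocality. Once this reduced-volume estimate is in hand, the point-picking, compactness, and rigidity steps proceed along familiar lines.
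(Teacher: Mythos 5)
You should first be aware that the paper contains no proof of Theorem \ref{t-TW}: it is quoted verbatim from Tian--Wang \cite{TW} and used as a black box, so there is no internal argument to compare yours with. What you are attempting is a re-proof of Tian--Wang's pseudolocality theorem, which in \cite{TW} rests on a substantial body of structure theory and is not a short argument.

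Judged on its own terms, your sketch has a genuine gap, and you have in effect named it yourself: the step converting the almost-Euclidean volume and Ricci lower bound at $t=0$ into an almost-maximal reduced volume based at the rescaled blow-up point $(q_i,0)$ is the heart of the whole theorem, and nothing in the proposal supplies it. The reduced volume is computed along $\mathcal{L}$-geodesics that must cross the time region between the initial slice and the backward parabolic neighbourhood produced by point-picking, where no curvature control whatsoever is available, so the claim that $\wt V(\tau)\to(4\pi)^{n/2}$ as $\tau$ approaches the initial time is unjustified; this is precisely why Perelman's own pseudolocality proof in \cite{Perelman-1} works with a cut-off local entropy (logarithmic Sobolev) inequality evaluated at the initial time rather than with reduced volume. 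Two further asserted steps also do not follow from the construction: the rescaled elapsed time $Q_is_i$ is only bounded below (by roughly $\alpha$, since $|\Rm|(p_i,t_i)>\alpha/t_i$), not divergent, so the Cheeger--Gromov limit need not be ancient and Perelman's rigidity for ancient solutions of maximal reduced volume cannot be invoked; and the $\kappa$-noncollapsing needed for the compactness step is itself claimed to come from the almost-Euclidean volume of the initial slice, which again requires exactly the quantitative propagation from $t=0$ up to the blow-up scale that is missing. So the proposal is a reasonable-looking outline in the general spirit of contradiction/blow-up arguments, but its pivotal estimate is absent and the argument as written does not close.
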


\begin{proof}[Proof of Theorem \ref{t-shortime}]
Let $(M^n,g_0)$ be as in Theorem \ref{t-shortime} with $\delta_0$ be   chosen so that $1-\delta_0\ge(1-\delta)^\frac12$ where $\delta$ is the constant in Theorem \ref{t-TW}.  Let $\rho(x)$ be the function in Corollary \ref{c-distance-1} so that $|\rho(x)-r(x)|\le 1$ for $r(x)\ge r_0$, say. Let
\bee
\sigma(x)=\exp\lf(\int_0^{\rho(x)}\frac{ds}{\phi(s)}\ri).
\eee
Then $\sigma(x)$ is an exhaustion function of $M$. Moreover,
\be\label{e-exh-3}
|\nabla \sigma|(x)\le C_1\sigma(x),
\ee
and
\be\label{e-exh-4}
\begin{split}
\sigma_{ij}(x)\le &\sigma(x) \frac1{\phi(\rho(x))}\rho_{ij}(x)\\
\le &C_1\sigma(x)
\end{split}
\ee
for some constant on $C_1$ for all $x\in M$.

Choose $\sigma_k\to \infty$ and let $U_k=\{x|\ \sigma(x)<\sigma_k\}$. Let $0<\kappa<\frac18$ be a fixed number such that
 \be\label{e-kappa}
 (1+c_2\kappa)^{-n}\ge (1-\delta)^\frac12
 \ee
 where $c_2>0$ is the absolute constant in Lemma \ref{l-exhaustion-1}. Hence $\delta_0, \kappa$ depend only on $n, \a$.
 Let $F(s)$ be the function as in Lemma \ref{l-exhaustion-1} corresponding to $\kappa$. Define the metric $h_k(x)=\exp(2F_k(x))g_0(x)$, where $F_k(x)=F(\frac{\sigma(x)}{\sigma_k})$. As in \cite{He2016}, $(U_k, h_k)$ is complete with bounded curvature. The Ricci tensor $\wt R_{ij}$ of $h_k$ and the Ricci tensor $R_{ij}$ of $g_0$ are related as follows:

\bee
\begin{split}
\wt R_{ij}=&R_{ij}-(n-2)(F_k)_{ij}+(n-2)(F_k)_i(F_k)_j-(\Delta  F_k +(n-2)|\nabla  F_k |^2)g_{ij}.
\end{split}
\eee
where the derivatives of $F_k$ are taken with respect to $g$. By Lemmas  \ref{l-exhaustion-1} and \ref{l-distance-1}, \eqref{e-exh-3}, \eqref{e-exh-4} one can conclude that
\be\label{e-metric-1}
\wt R_{ij}\ge -L^2h_k
\ee
for some constant $L>0$ depending only on $n, \kappa, l, \phi$.

On the other hand, for any $x\in U_k$, if $\sigma(x)\le (1-2\kappa)\sigma_k$, then $\sigma(y)\le (1-\kappa+\kappa^2)\sigma_k<\sigma_k$ in $B_{g_0}(x,r_1)$ for some $r_0\ge r_1>0$ independent of $k, x$. So $F_k(y)=0$ in $B_{g_0}(x,r_1)$. Hence $(B_k(x,r_1),h_k)=(B_{g_0}(x,r_1),g_0)$. From this we conclude that $V_k(B_k(x,r))\ge (1-\delta)r^n\omega_n$ for all $r\le r_1$, where $B_k(x,r)$ is the geodesic ball of radius $r$ with respect to $h_k$ and $V_k$ is the volume with respect to $h_k$.

Suppose $\sigma(x)\ge (1-2\kappa)\sigma_k$, let $s=\sigma(x)/\sigma_k<1$ and let $\tau>0$ be as in Lemma \ref{l-exhaustion-1}(iii). Let $r_2$ be the largest $r$ so that $B_k(x,r)\subset \{y|\ s-\tau<\sigma(y)/\sigma_k<s+\tau\}$. Then there is $y\in \p B_k(x,r_2)$ with $\sigma(y)=(s+\tau)\sigma_k$ or $(s-\tau)\sigma_k$. Now
$$
r_2\ge  \exp(F(s-\tau))\ell
$$
where $\ell$ is the length of the minimal geodesic $\gamma(t)$ (with $0\le t\le 1$) from $x$ to $y$ with respect to $h_k$, while the length is measured with respect to $g_0$. Now
\bee
\begin{split}
\tau\sigma_k=&|\sigma(x)-\sigma(y)|\\
=&\int_0^1\la\nabla^{g_0}\sigma, \gamma'\ra_{g_0}dt\\
\le &C_1\sigma_k \ell
\end{split}
\eee
by \eqref{e-exh-3}. Hence we have
$$
r_2\ge C_2\kappa^2
$$
for some constants $C_2>0$ independent of $x, k$ by Lemma \ref{l-exhaustion-1}. Hence  for $r<\min\{r_2,r_0\}$
\bee
\begin{split}
V_k(B_k(x,r))\ge& V_k(B_{g_0}(x, e^{-F(s+\tau)}r)\\
\ge& \exp(n(F(s-\tau)-F(s+\tau))(1-\delta_0)r^n\omega_n\\
\ge&(1+c_2\kappa)^{-n}(1-\delta_0)r^n\omega_n\\
\ge &(1-\delta)r^n\omega_n
\end{split}
\eee
by the choices of $\kappa, \delta_0$.
 We can conclude that there is $r_2>0$ such that for all $r<r_2$, for all $k, x\in U_k$,
\be\label{e-metric-1}
V_k(B_k(x,r))\ge (1-\delta)r^n\omega_n.
\ee
By rescaling, using Theorem \ref{t-TW} and the estimates in \cite{Chen2009,Shi1989,Simon2008} one can conclude that Theorem \ref{t-shortime} is true as in \cite{He2016}.

\end{proof}

\section{preservation of nonnegativity of  bisectional curvature}

Let $(M^n,g_0)$ be a complete noncompact \K manifold with complex dimension $n$. Let $g(t)$ smooth complete solution to  the \KR flow on $M\times[0,T], T>0$, with initial data $g_0$. That is:
\be\label{e-KR}
\left\{
  \begin{array}{ll}
  \ppt g_{\a\bb}&=-R_{\a\bb}; \\
   g(0) &= g_0.
  \end{array}
\right.
\ee
  In the proof of the uniformization result Corollary \ref{c-uniformization}, we use a result in \cite{HuangTam2015} which says that if the curvature of $ g(t)$ is bounded by $a/t$ with $a>0$ small enough depending only on $n$, then nonnegativity of bisectional curvature is preserved under the flow.  There are examples of solutions with curvature bound  as above but $a$ may not be small, see \cite{ChauLiTam1} for example. So in this section we want to remove the restriction that $a$ being small.  In fact, we will obtain the following stronger result:

\begin{thm}\label{t-nonnegative} Let $(M^n,g(t))$  be a smooth complete noncompact solution to the \KR flow on $M\times[0,T]$, $T>0$.
\begin{enumerate}
  \item [(i)] $g(0)$ has nonnegative bisectional curvature; and
  \item [(ii)] $|\Rm(g(t))|\le at^{-\theta}$   for some $a>0$, $0<\theta<2$ on $M\times(0,T]$.
\end{enumerate}
 Then $g(t)$ also has nonnegative bisectional curvature for $t\in[0,T]$. \end{thm}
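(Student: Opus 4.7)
The plan is a tensor-level maximum principle for $R_{\a\bb\g\bd}$ under \KRF refining the argument of \cite{HuangTam2015} (which treated $\theta=1$ with $a$ small) so as to accommodate both arbitrary $a>0$ and the broader range $\theta\in(0,2)$. The central structural ingredients are Mok's reaction identity---the quadratic reaction $Q(R)$ in the evolution of $R_{\a\bb\g\bd}$ is nonnegative at null eigenvectors whenever $R$ lies in the nonnegative bisectional cone---and a global distance comparison between $g(t)$ and $g(0)$ enabled precisely by the hypothesis $\theta<2$.

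I would first establish the distance comparison. Since $|\Ric(g(t))|\le Cat^{-\theta}$ with $\theta<2$, Hamilton's distance-evolution estimate integrates to
\[
\bigl|d_{g(t)}(x,y)-d_{g(0)}(x,y)\bigr|\le C(n,a)\int_0^t s^{-\theta/2}\,ds\le C(n,a,\theta)\,t^{1-\theta/2}.
\]
Thus $g(t)$ and $g(0)$ induce uniformly bi-Lipschitz metric structures on $M$ over $[0,T]$, and smooth cutoffs constructed from $d_{g(0)}(\cdot,x_0)$ remain effective cutoffs at every $t\in[0,T]$.

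For the main argument, fix $\epsilon>0$, let $\rho(x)$ be a smoothing of $d_{g(0)}(x,x_0)$, and set $G_{\a\bb\g\bd}:=g_{\a\bb}g_{\g\bd}+g_{\a\bd}g_{\g\bb}$, the canonical generator of the interior of the nonnegative bisectional cone. The goal is to construct a barrier $\Phi(x,t)>0$ such that the perturbed tensor
\[
H_{\a\bb\g\bd}:=R_{\a\bb\g\bd}+\epsilon\,\Phi(x,t)\,G_{\a\bb\g\bd}
\]
satisfies: (i) $\Phi\ge Cat^{-\theta}$ near $t=0$, so $H$ lies strictly in the nonnegative bisectional cone for small $t$; (ii) $\Phi\to\infty$ as $\rho(x)\to\infty$, forcing any cone-boundary contact of $H$ to occur at a finite point; and (iii) at a would-be first contact point $(x^*,t^*,V^*,W^*)$, the parabolic inequality combined with Mok's identity applied to the expansion $R=H-\epsilon\Phi G$ yields
\[
0\ge(\partial_t-\Delta)H(V^*,\bar V^*,W^*,\bar W^*)\ge \epsilon\,|V^*|^2|W^*|^2\,\bigl[\Phi_t-\Delta\Phi-C|\Rm|\,\Phi\bigr]_{(x^*,t^*)},
\]
with the bracket strictly positive---a contradiction. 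Sending $\epsilon\to 0$ then yields nonnegative bisectional curvature of $g(t)$ for all $t\in[0,T]$.

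The main obstacle is the calibration of $\Phi$: requirement (i) forces $\Phi\sim t^{-\theta}$ as $t\to 0$, giving $\Phi_t/\Phi\sim-\theta/t$, whereas (iii) demands $\Phi_t/\Phi>C|\Rm|\sim Cat^{-\theta}$. These two requirements are directly incompatible without additional structure, so one is led to take $\Phi(x,t)=(A+\rho(x)^2)\Psi(t)$, using a spatial weight growing at infinity to absorb the temporal excess. The hypothesis $\theta<2$ is essential here: via the distance comparison above, one obtains that $\Delta(\rho^2)$ computed under $g(t)$ is uniformly bounded by $C(1+\rho^2)$ on $[0,T]$, which is precisely the scaling needed for the calibration among $A$, $\Psi$, and the reaction cross-term to close.
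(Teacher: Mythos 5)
Your calibration of the barrier $\Phi$ is where the argument breaks down, and it cannot be repaired by the spatial weight you propose. You correctly note that the contact-point inequality forces $\Phi_t/\Phi\gtrsim a t^{-\theta}$, while your requirement (i) forces $\Phi\gtrsim t^{-\theta}$ as $t\to0$; these are genuinely incompatible (the first, integrated, forces $\Phi\to0$ super-exponentially as $t\to 0$ when $\theta\ge1$). Writing $\Phi=(A+\rho^2)\Psi(t)$ does not absorb the excess, because the dangerous reaction term $C|\Rm|\,\Phi$ carries the \emph{same} spatial weight $(A+\rho^2)$ as $\Phi_t$, so the weight cancels and you are back to needing $\Psi'/\Psi\gtrsim t^{-\theta}$ together with $\Psi\gtrsim t^{-\theta}$. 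The paper's resolution is different and is the key idea you are missing: one does \emph{not} need the perturbation to dominate $t^{-\theta}$ at $t=0$. Since the flow is smooth up to $t=0$, the curvature is bounded on compact sets near $t=0$, and Lemma \ref{l-lowerbd-1} (proved by a bootstrap in $\beta$: first $\beta<\tfrac12$, then $\beta<\tfrac34$, etc.) shows $\mathrm{BK}(g(t))\ge-\exp(-t^{-\beta})$ for every $\beta<1$ for small $t$. Hence the perturbation can be taken of the form $\lambda(t)=\exp(-r^{\frac12}t^{-\beta})$ with $\theta-1<\beta<1$ (possible exactly because $\theta<2$): it vanishes to infinite order at $t=0$, yet still dominates the initial negativity, and its logarithmic derivative $\beta r^{\frac12}t^{-1-\beta}$ beats both the $t^{-\theta}$ reaction terms and the cutoff error terms. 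The localization is then done by multiplying $R$ by a cutoff $\Psi$ of the special profile $\exp(-(1-s)^{-m})$, whose derivatives are bounded by $\Psi|\log\Psi|^{1+\frac1m}$ and $\Psi|\log\Psi|^{2(1+\frac1m)}$; at the contact point the relation $\Psi R+\lambda B=0$ gives $|\log\Psi|\lesssim r^{\frac12}t^{-\beta}$, so the cutoff errors are $\lesssim\lambda t^{-2\beta(1+\frac1m)}$, dominated by $\lambda'$ once $m$ is large, since $\beta<1$. Theorem \ref{t-nonnegative} then follows by letting $r\to\infty$ and iterating in time.

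A second, subsidiary gap is your distance comparison. From $|\Rm(g(t))|\le at^{-\theta}$ with $\theta<2$ you only get the one-sided, Perelman-type control (distances do not \emph{shrink} by more than $Ca^{\frac12}t^{1-\frac\theta2}$, which is the content of Lemma \ref{l-distance}(i)); the opposite direction requires a Ricci lower bound and produces a length-dependent factor $\exp\bigl(\int_0^t Cas^{-\theta}ds\bigr)$ that diverges for $\theta\ge1$, so the claimed uniform bi-Lipschitz equivalence of $g(t)$ and $g(0)$ is unjustified. Likewise, a bound $\Delta_{g(t)}\rho^2\le C(1+\rho^2)$ for a fixed $g(0)$-based $\rho$ does not follow from metric comparability: it needs control of the drift of the Christoffel symbols, i.e.\ of $\int_0^t|\nabla\Ric(g(s))|\,ds$, which is not available for general $\theta<2$. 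The paper avoids this entirely by working with the time-dependent distance $d(x,t)$ and Perelman's barrier inequality $(\frac{\p}{\p t}-\Delta)\lf(d(x,t)+C t^{1-\frac\theta2}\ri)\ge0$, which is exactly where the hypothesis $\theta<2$ enters on the localization side.
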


 Theorem \ref{t-nonnegative} is a consequence of the following local estimate, which will also be used in the proof of Theorem \ref{t-curvature-est-1}.
 \begin{thm}\label{t-lowerbound-BK-1}
  For any $a_0>0$, $2>\theta>0$, and $1>\beta>\theta-1$, there exists $T_0(n,a_0,\theta,\beta)>0$, with the following properties: Let $(M^n,g(t))$ be a complete smooth solution to the \KR flow on $M\times[0,T], T>0$, and $x_0\in M$.  Suppose:
  \begin{enumerate}
  \item [(i)] $\mathrm{BK}(g(0))\ge 0$ on $B_0(x_0,4r)$, $r\ge1$; and
  \item [(ii)] $|\Rm(g(t))|\le at^{-\theta}$   for some $a_0\ge a>0$, $0<\theta<2$ on $B_0(x_0,4r)\times(0,T]$.
\end{enumerate}
  Then
  $$
  \mathrm{BK}(g(t))\ge -\exp\lf(-r^\frac12 t^{-\beta}\ri)
  $$
  on $B_t(x_0,r)$ for all $0<t\le T\wedge T_0$.

 \end{thm}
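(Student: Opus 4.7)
The plan is to apply a tensorial maximum principle to the minimum holomorphic bisectional curvature of $g(t)$ against the time-dependent barrier $\Phi(t) := \exp(-r^{1/2}\,t^{-\beta})$, exploiting the gap $\beta > \theta - 1$ so that the barrier dominates the reaction term coming from Mok's quadratic terms.

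\textbf{Evolution and ODE domination.} Under the \KR flow, the curvature tensor evolves by $(\partial_t - \Delta) R_{\alpha\bar\beta\gamma\bar\delta} = $ Mok's quadratic reaction. Following the null-vector computation of Mok, adapted to the flow setting in \cite{HuangTam2015}, at a spacetime point where $\mathrm{BK}_{\min}(x,t) < 0$ is attained in some pair of complex directions, the Mok reaction is non-negative plus an error of size $O(|\Rm|\,|\mathrm{BK}_{\min}|)$. Hence the nonnegative quantity $u := \max(-\mathrm{BK}_{\min}, 0)$ satisfies, in the support sense,
\[
(\partial_t - \Delta) u \le C(n)\,|\Rm(g(t))|\,u \qquad \text{wherever } u > 0.
\]
Direct computation gives $\Phi'(t) = \beta r^{1/2} t^{-\beta - 1} \Phi(t)$; since $r \ge 1$, $\beta + 1 > \theta$, and $|\Rm| \le a_0 t^{-\theta}$, I can choose $T_0 = T_0(n, a_0, \theta, \beta) > 0$ such that $\Phi'(t) \ge 2 C(n)\,|\Rm(g(t))|\,\Phi(t)$ for all $t \in (0, T_0]$.

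\textbf{Localization and maximum principle.} Since $\theta < 2$, Hamilton's distance-distortion estimate under $|\Rm| \le a_0 t^{-\theta}$ gives $|d_{g(t)}(x_0,\cdot) - d_{g(0)}(x_0,\cdot)| \le C(n, a_0, \theta)\,t^{1-\theta/2}$, so after shrinking $T_0$ I may assume $B_t(x_0, r) \subset B_0(x_0, 2r) \subset B_0(x_0, 3r) \subset B_0(x_0, 4r)$ for $t \in (0, T_0]$. Fix a smooth cutoff $\eta \equiv 1$ on $B_0(x_0, 3r)$ and $\eta \equiv 0$ outside $B_0(x_0, 4r)$. For $\varepsilon, A > 0$, define $F_\varepsilon := u - \Phi - \varepsilon - A(1 - \eta)$; pick $A$ large enough (using the a priori bound $u \le C|\Rm| \le Ca_0 t^{-\theta}$ on the support of $\eta$) so that $F_\varepsilon < 0$ wherever $\eta < 1$. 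By hypothesis (i) and smoothness of $g(t)$ at $t=0$, $F_\varepsilon(\cdot, 0) < 0$ on $B_0(x_0, 4r)$. If $F_\varepsilon$ ever attains zero in the interior, at a first touching spacetime point $(p_*, t_*)$ one has $u(p_*,t_*) = \Phi(t_*) + \varepsilon > 0$, and combining the evolution inequality for $u$ with the ODE domination above yields
\[
0 \le (\partial_t - \Delta) F_\varepsilon(p_*, t_*) \le C(n)|\Rm|\,u - \Phi'(t_*) + A\,\Delta\eta(p_*) < 0,
\]
a contradiction. Letting $\varepsilon \to 0$ yields $u \le \Phi$ on $B_t(x_0, r) \times (0, T \wedge T_0]$, which is the desired bound.

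\textbf{Main obstacle.} The principal difficulty is rigor: $\mathrm{BK}_{\min}$ is not smooth, so the evolution inequality must be interpreted either through Hamilton's tensor maximum principle on the ODE-invariant cone $\{\mathrm{BK} \ge -\Phi(t)\}$ (in which case the ODE domination above becomes a cone-preservation statement), or by approximating $\mathrm{BK}_{\min}$ with smooth support functions obtained by freezing the minimizing complex two-plane at each candidate touching point. A secondary technical point is that the cutoff errors $|\nabla\eta|, |\Delta\eta|$ are $O(r^{-1}), O(r^{-2})$ and must be absorbed into the $\beta r^{1/2} t^{-\beta-1}$ gap; since $r \ge 1$ the absorption is easy after further shrinking $T_0$, which is precisely the reason the exponent $r^{1/2}$ is large enough to make everything close up.
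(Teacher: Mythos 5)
There is a genuine gap, and it is in the localization. Your barrier is $F_\varepsilon=u-\Phi-\varepsilon-A(1-\eta)$ with a time-independent cutoff $\eta$ and a constant $A$, and at a touching point you must beat the error terms $A|\Delta_{g(t)}\eta|$ and $C(n)|\Rm|\,u=C(n)|\Rm|\,(\Phi+\varepsilon+A(1-\eta))$ using only $\Phi'(t)=\beta r^{1/2}t^{-1-\beta}\Phi(t)$. But $\Phi(t)=\exp(-r^{1/2}t^{-\beta})$ is exponentially small, so $\Phi'$ is exponentially small as well; none of the cutoff- or $\varepsilon$-generated errors carries the factor $\Phi$, so they cannot be absorbed (your closing remark that ``since $r\ge1$ the absorption is easy'' conflates the logarithmic derivative of $\Phi$, which is large, with $\Phi'$ itself, which is tiny). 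In particular $C(n)|\Rm|\varepsilon\sim \varepsilon a\,t^{-\theta}$ already defeats the argument even where $\eta\equiv1$. Two further problems: (a) for $1\le\theta<2$ the integral $\int_0^t s^{-\theta}ds$ diverges, so $g(t)$ need not be uniformly equivalent to $g(0)$ and $\Delta_{g(t)}\eta$ of a fixed cutoff is uncontrolled — this is why the paper builds its cutoff from the evolving quantity $d(x,t)+C_1t^{1-\theta/2}$ and uses Lemma \ref{l-distance}; (b) outside $B_0(x_0,4r)$ the theorem assumes nothing (no global curvature bound on $M$), so no constant $A$ makes $F_\varepsilon<0$ there, and the existence of a first interior touching point is not justified. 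Also, the pointwise inequality $(\partial_t-\Delta)u\le C(n)|\Rm|u$ for $u=\max(-\mathrm{BK}_{\min},0)$ is not available as stated; what the null-vector condition gives is $Q(A)\ge0$ at a first zero of a tensor $A$ lying in the nonnegativity cone, which is exactly the cone formulation you defer to, so the reaction step is only as strong as the localized touching-point argument it must be embedded in.

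The paper's proof closes precisely because the cutoff enters multiplicatively: one sets $A:=\Psi R+\lambda B$ with $\lambda(t)=\exp(-r^{1/2}t^{-\beta})$ and $\Psi=\phi\lf((2r)^{-1}(d(x,t)+C_1t^{1-\theta/2})\ri)$, where $\phi$ is the special profile satisfying \eqref{e-cutoff-2}, i.e. $|\phi'|\le C\phi|\log\phi|^{1+\frac1m}$ and $|\phi''|\le C\phi|\log\phi|^{2(1+\frac1m)}$. At a first zero of $A$ one has $\Psi|R|\sim\lambda$, hence $\Psi\ge c\lambda t^{\theta}$ and $|\log\Psi|\le Cr^{1/2}t^{-\beta}$, so every cutoff error is bounded by $C\lambda t^{-2\beta(1+\frac1m)}$, i.e. it does carry the factor $\lambda$, and is then dominated by $\lambda'\sim r^{1/2}t^{-1-\beta}\lambda$ because $\beta+1>\theta$ and $\beta+1>2\beta(1+\frac1m)$ for $m$ large — this is exactly where the hypotheses $1>\beta>\theta-1$ are used, and your additive scheme has no analogue of this mechanism. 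Finally, to know the localized quantity is strictly positive for small $t$ (so that a first interior zero time $t_1>0$ exists), the paper proves Lemma \ref{l-lowerbd-1} by a separate bootstrap (iterating the admissible $\beta$ from below $\tfrac12$ up to any $\beta<1$ under the locally bounded curvature available from smoothness on $[0,T]$); your constant shift $\varepsilon$ does not substitute for this, for the reason noted above.
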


  Here
 $$
 a_1\wedge\cdots\wedge a_l:=\min\{a_1,\dots,a_l\}.
 $$
Also, the geodesic ball of radius $r$ with center at $x_0$ with respect to $g(t)$ will be denoted by $B_t(x_0,r)$ and its volume will be denoted by $V_t(x_0,r)$. The notation ${\rm BK}(g(t))\ge K$ means that the bisectional curvature of $g(t)$ is bounded below by $K$. That is:
$$
R(X,\bar X,Y,\bar Y)\ge K\lf(|X|^2 |Y|^2+|\la X,\bar Y\ra|^2\ri)
$$
for all $X, Y\in T^{(1,0)}(M)$.

Moreover,   a constant  $C>0$ depending only on $\a,\b,\gamma,...$
will be denoted as $ C(\a,\beta,\gamma,...)$.

Let us collect some known results on the distance function $d(t,x)$ from $x_0$ with respect to the metric $g(t)$. See \cite{Chow,Perelman-1,SimonTopping2016}.

\begin{lma}\label{l-distance} Let $(N^k,g(t)),t\in [0,T]$ be a smooth complete solution to the Ricci flow, where $N^k$ is a noncompact smooth manifold with real dimension $k$. Let $x_0\in N$ and let $d(t,x)$ be the distance function from $x_0$ with respect to $g(t)$. Suppose $|\Rm(g(t))|\le at^{-\theta}$, with $a>0, 2>\theta>0$ on $B_0(x_0,2r)$. Then
\begin{enumerate}
  \item [(i)] \bee
B_t(x_0, 2r-C(k,\theta)a^\frac12 t^{1-\frac\theta2})\subset B_0(x_0,2r).
\eee
  \item [(ii)] Let $T'>0$ be such that $2r-C(k,\theta)a^\frac12 t^{1-\frac\theta2}\ge\frac32 r$ for $t<T\wedge T'$. Then

  \bee
(\frac{\p}{\p t}-\Delta)\lf(d(x,t)+C(k,a,\theta)t^ {1-\frac12\theta}\ri)\ge 0
\eee
in the barrier sense for $x\in B_t(x_0,\frac32r)\setminus B_t(x_0, t^{\frac\theta2})$, $t< T\wedge T'$.
\end{enumerate}
\end{lma}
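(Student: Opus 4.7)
The plan is to deduce both parts from the Hamilton--Perelman distance distortion inequality (see \cite{Perelman-1, SimonTopping2016}): if $\Ric(g(t))\le (k-1)K$ on $B_t(y_1,r_0)\cup B_t(y_2,r_0)$ and $d_{g(t)}(y_1,y_2)\ge 2r_0$, then at such $t$, in the barrier sense,
\[
\left(\frac{\p}{\p t}-\Delta\right)d_{g(t)}(y_1,y_2)\ge -C(k)\lf(Kr_0+r_0^{-1}\ri),
\]
together with the pointwise-in-time variant $\tfrac{d^+}{dt}d_{g(t)}(y_1,y_2)\ge -C(k)(Kr_0+r_0^{-1})$. Under our hypothesis $|\Rm(g(t))|\le at^{-\theta}$ the natural scaling choice $r_0:=t^{\theta/2}/\sqrt{a}$ balances the two error terms, so that $Kr_0+r_0^{-1}\le 2\sqrt{a}\,t^{-\theta/2}$.

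For part (i), I would apply the pointwise variant with $y_1=x_0$, $y_2=x$ and integrate from $0$ to $t$. Since $\int_0^t s^{-\theta/2}\,ds=\tfrac{2}{2-\theta}t^{1-\theta/2}$ (here the hypothesis $\theta<2$ first enters), this yields
\[
d_0(x_0,x)\le d(t,x)+C(k,\theta)\,a^{1/2}\,t^{1-\theta/2},
\]
and the contrapositive is precisely the claimed inclusion. One does need to verify that the auxiliary balls $B_t(x_0,r_0)$ and $B_t(x,r_0)$ sit inside $B_0(x_0,2r)$, where the assumed curvature bound is available; this is handled by a standard bootstrap in $t$, since the set of such $t\in[0,T\wedge T']$ is nonempty at $t=0$, and is both open and closed by the integrated distortion bound just derived.

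For part (ii), I would take instead $r_0:=\tfrac12 t^{\theta/2}$, so that the requirement $d(t,x)\ge 2r_0$ matches exactly the stated hypothesis $d(t,x)\ge t^{\theta/2}$; the curvature bound on the two auxiliary balls is available on $B_t(x_0,\tfrac32 r)$ by part (i), since $T'$ has been chosen so that $2r-C(k,\theta)a^{1/2}t^{1-\theta/2}\ge\tfrac32 r$ on $[0,T\wedge T']$. The heat-inequality form of Perelman's estimate then gives
\[
\left(\frac{\p}{\p t}-\Delta\right)d(t,x)\ge -C(k,a,\theta)\,t^{-\theta/2}
\]
in the barrier sense on the required set. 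Adding the time-shift $C(k,a,\theta)\,t^{1-\theta/2}$, whose time derivative is $C(k,a,\theta)(1-\tfrac\theta2)\,t^{-\theta/2}$, absorbs the negative contribution once the constant is enlarged so that $(1-\tfrac\theta2)$ dominates; again the hypothesis $\theta<2$ is used precisely through the positivity of $1-\tfrac\theta2$.

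The main (and only genuinely delicate) step is the continuity/bootstrap argument keeping the Perelman auxiliary balls inside the good region $B_0(x_0,2r)$ uniformly for $t\in[0,T\wedge T']$; with that in hand, both inequalities reduce to a direct scaling computation. In effect the proof is a $\theta$-parameter repackaging of the $\theta=1$ arguments in \cite{Perelman-1, SimonTopping2016}.
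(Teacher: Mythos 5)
The paper itself gives no proof of this lemma: it is quoted as a known estimate with references to \cite{Chow,Perelman-1,SimonTopping2016}, and your overall route (Hamilton--Perelman distance estimates with auxiliary radius $r_0\sim t^{\theta/2}$, integration in time using the integrability of $t^{-\theta/2}$ for $\theta<2$) is exactly the intended $\theta$-generalization of the $\theta=1$ arguments there. Your part (ii) is essentially correct: the heat-operator form of Perelman's estimate needs the Ricci upper bound only on the ball $B_t(x_0,r_0)$ around $x_0$ (not around $x$), and with $r_0\sim t^{\theta/2}$ this ball lies in $B_t(x_0,\tfrac32 r)\subset B_0(x_0,2r)$ by part (i) and the choice of $T'$; adding $C(k,a,\theta)t^{1-\theta/2}$ then absorbs the error precisely because $1-\tfrac\theta2>0$.

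The genuine gap is in part (i), at the step you yourself flag. The pointwise two-point estimate you integrate requires $\Ric\le (k-1)K$ on balls of radius $r_0$ around \emph{both} endpoints at time $t$, and your bootstrap (``the set of good $t$ is open and closed by the integrated distortion bound'') is circular: the integrated bound is only available once the containment $B_s(x,r_0)\subset B_0(x_0,2r)$ is known for all $s\le t$, and that containment is not a consequence of the inclusion being proved. In the contrapositive one must bound the decay of $d_t(x_0,y)$ for points $y$ \emph{outside} $B_0(x_0,2r)$, where no curvature control near $y$ is available at any time; and even for $x$ inside the shrinking ball, the auxiliary radius $r_0=t^{\theta/2}/\sqrt{a}$ can greatly exceed the margin $C a^{1/2}t^{1-\theta/2}$ (for instance when $\theta<1$, or when $a$ is small), so $B_t(x,r_0)$ pokes out of $B_0(x_0,2r)$ for $x$ near the edge of $B_t(x_0,2r-Ca^{1/2}t^{1-\theta/2})$. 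Capping $r_0$ by the available margin does not rescue the argument, since the term $r_0^{-1}$ in the estimate then blows up, and at points outside the ball the margin is zero. The proofs in the cited literature (the shrinking balls corollary of \cite{SimonTopping2016}, which is the case $\theta=1$, and the distance lemmas of \cite{Chen2009}) handle exactly this difficulty by a first-failure-time and point-selection argument, applying the two-point estimate to $x_0$ paired with points chosen along the time-$t$ minimizing geodesic that lie well inside the region where the curvature is controlled, rather than to the far endpoint itself. Some argument of that type is needed; the openness half of your continuity claim does not follow from the bound you have derived, so part (i) as written is not proved.
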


 Let us first recall a quadratic form for any $(0,4)$-tensor $T$ on $T_\mathbb{C}M$ with a metric $g$ by

\bee \begin{split} Q(T)(X,\bar X,Y,\bar Y):=&\sum\limits_{\mu,\nu=1}^n(|T_{X\bar\mu\nu\bar Y}|^2-|T_{X\bar\mu Y\bar\nu}|^2+T_{X\bar X\nu\bar \mu}T_{\mu\bar \nu Y\bar Y})\\
&-\sum\limits_{\mu=1}^n\textbf{Re}(T_{X\bar\mu}T_{\mu\bar XY\bar Y }+T_{Y\bar\mu}T_{X\bar X\mu\bar Y})\end{split}
\eee
for all $X,Y\in T^{(1,0)}(M)$, where $T_{\a\bb\gamma\bar\delta}=T(e_\a,\bar e_\b,e_\gamma,\bar e_\delta)$,
$T_{\a\bar \b}=g^{\gamma\bar \delta}T_{\a\bb\gamma\bar\delta}$ and $\{e_1,\dots,e_n\}$ is a unitary frame with respect to the metric $g$, $T_{X\bar \mu\nu\bar Y}=T(X,\bar e_\mu,e_\nu,\bar Y)$ etc. Here $T$ is a tensor has the following properties:
\bee
\ol{T(  X,Y,Z,W)}=T(\bar  X,\bar Y,\bar Z,\bar W);
\eee
\bee
T(X,Y,Z,W)=T(Z,W,X,Y)=T(X,W,Z,Y)=T(Y,X,W,Z).
\eee

Recall the evolution equation for holomorphic bisectional curvature: (See \cite[Corollary 2.82]{Chow})

\bee (\ppt-\triangle)R(X,\bar X,Y,\bar Y)=Q(R)(X,\bar X,Y,\bar Y) \eee for all $X,Y\in T^{1,0}_\mathbb{C}M$. Here $\triangle$  is with respect  to $g(t)$.

Next define a $(0,4)$-tensor $B$ on $T_\mathbb{C}M$ (with a metric $g$) by:
\bee
B(E,F,G,H)=g(E,F)g(G,H)+g(E,H)g(F,G) \eee
for all $E,F,G,H\in T_\mathbb{C}M$.

For positive integer $m$, let $\phi:[0,\infty)\to [0,1]$ be a smooth function such that
\be\label{e-cutoff-1}
\phi(s)=\left\{
  \begin{array}{ll}
  1&\hbox{\ for $0\le s\le \frac14$;}\\
      \exp(-\frac1{(1-s)^m})&\hbox{\ for $\frac12\le s\le 1$;} \\
    0, & \hbox{\ for $s\ge1$.}
  \end{array}
\right.
\ee
and $\phi'\le 0$.
Then
\be\label{e-cutoff-2}
\left\{
  \begin{array}{ll}
    |\phi'|\le   & C_m \phi\,|\log \phi|^{1+\frac1m};\\
    |\phi''|\le &C_m\phi\,|\log\phi|^{2(1+\frac1m)}.
  \end{array}
\right.
\ee

To prove Theorem \ref{t-lowerbound-BK-1}, we begin with the following lemma, which says that the bisectional curvature of $g(t)$ is 'nonnegative' up to high order near $t=0$.

\begin{lma}\label{l-lowerbd-1} Suppose $(M^n,g(t))$ is a complete solution to the \KR flow on $M\times[0,T], T>0$ and $x_0\in M$. Suppose $\mathrm{BK}(g(0))\ge0$ on $B_0(x_0,4r)$
  and $|\Rm(g(t))|\le E$ on $B_0(x_0,4r)\times[0,T]$. Then for any $0<\beta<1$, there is a neighborhood $U$ of $x_0$ inside $B_0(x_0,4r)$ and $T>T'>0$ such that
  $$
  \mathrm{BK}(g(t))\ge -\exp(-t^{-\beta})
  $$
  on $U\times[0,T']$.
\end{lma}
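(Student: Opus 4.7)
My plan is to apply Hamilton's tensor maximum principle to the perturbed curvature
\[
\tilde R(x,t):=R(x,t)+f(t)\,\eta(x)\,B,
\]
where $f(t):=\exp(-t^{-\beta})\in C^\infty([0,T])$ is the essential choice: it satisfies $f(0)=0$ and
\[
\frac{f'(t)}{f(t)}=\beta\,t^{-\beta-1}\longrightarrow+\infty\quad\text{as }t\to 0^+,
\]
a super-polynomial growth that will eventually dominate every bounded error of the form $C\cdot E$. The cutoff $\eta$ is taken smooth, compactly supported in $B_0(x_0,4r)$, with $\eta\equiv 1$ on the closed ball $\overline{W}:=\overline{B_0(x_0,\rho)}$ for some $\rho>0$, arranged so that $\overline{W}$ is compactly contained in the open set $\{\eta\equiv 1\}$. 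The bounded-curvature hypothesis makes $g(t)$ uniformly equivalent to $g(0)$ on $\mathrm{supp}(\eta)$ for short time, so covariant estimates for $\eta$ transfer from $g(0)$ to $g(t)$.

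Since $f(0)=0$ and $\mathrm{BK}(R(0))\ge 0$ on $B_0(x_0,4r)$, we have $\mathrm{BK}(\tilde R(\cdot,0))\ge 0$ on $\mathrm{supp}(\eta)$. I would argue by contradiction: let $t_0\in(0,T']$ be the first time at which $\mathrm{BK}(\tilde R)$ vanishes on $\overline{W}$, attained at $(x_0',X_0,Y_0)$ with $x_0'\in\overline{W}$ and $(X_0,Y_0)$ a pair of $g(t_0)$-unit $(1,0)$-vectors. Since $x_0'$ lies in the open set $\{\eta\equiv 1\}$, on an open neighborhood $V$ of $x_0'$ the cutoff $\eta$ is identically $1$, so $\nabla_t\eta(x_0')=\Delta_t\eta(x_0')=0$ and $\tilde R=R+fB$ on $V$. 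The Hamilton-Mok null-vector condition applied to $\tilde R\ge 0$ at the zero direction $(X_0,Y_0)$ yields $Q(\tilde R)(X_0,\bar X_0,Y_0,\bar Y_0)\ge 0$; expanding
\[
Q(\tilde R)=Q(R)+2L(R,fB)+f^2 Q(B)
\]
and using $|\Rm|\le E$ gives $Q(R)(X_0,\bar X_0,Y_0,\bar Y_0)\ge -C(n)\,E\,f(t_0)$. Combining with the interior maximum-principle inequality $(\partial_t-\Delta_t)\tilde R(X_0,\bar X_0,Y_0,\bar Y_0)\le 0$ at $(x_0',t_0)$, computed in a $g(t_0)$-parallel frame using $(\partial_t-\Delta_t)R=Q(R)$ and the bounded $O(E)$-contribution of $(\partial_t-\Delta_t)B$, this collapses to $f'(t_0)\le C(n,E)\,f(t_0)$, i.e.
\[
\beta\,t_0^{-\beta-1}\le C(n,E),
\]
which forces $t_0\ge T'=T'(n,E,\beta)>0$ and gives the contradiction.

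The principal obstacle is verifying that $(x_0',t_0)$ is a genuine parabolic space-time minimum of $\tilde R(X_0,\bar X_0,Y_0,\bar Y_0)$ on an open neighborhood in $M\times[0,T]$, and not merely on $\overline{W}\times[0,t_0]$: if $\mathrm{BK}(\tilde R)$ has already dropped below zero outside $\overline{W}$ (but within $\{\eta\equiv 1\}$) at some earlier time, then $x_0'$ need not be an interior minimum in $M$, and the spatial inequality $\Delta_t u\ge 0$ is no longer automatic. I would handle this by running the above argument on the nested family $\overline{B_0(x_0,\rho')}$ with $0<\rho'\le\rho$: for each $\rho'$, the first vanishing time of $\mathrm{BK}(\tilde R)$ on $\overline{B_0(x_0,\rho')}$ satisfies the same $\rho'$-independent lower bound $T'=T'(n,E,\beta)$, since the estimate above does not depend on $\rho'$; alternatively, one reinterprets the inequality on the min-BK envelope $K_{\mathrm{BK}}(x,t):=\inf_{X,Y}\mathrm{BK}(\tilde R)(x,t)$ in the viscosity-solution sense, which also handles the boundary case. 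Taking $U:=W=B_0(x_0,\rho)$, where $\eta\equiv 1$, we thereby obtain $\mathrm{BK}(R(t))\ge -f(t)=-\exp(-t^{-\beta})$ on $U\times[0,T']$, as required.
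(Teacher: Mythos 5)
The central step of your argument has a genuine gap: the localization. You put the cutoff on the background tensor ($\tilde R=R+f\eta B$) rather than on the curvature itself, so outside the set $\{\eta\equiv1\}$ (and indeed anywhere off the parabolic initial slice) you have no information on the sign of $\mathrm{BK}(\tilde R)$: nothing is assumed about $\mathrm{BK}(g(t))$ for $t>0$, even inside $B_0(x_0,4r)$. Consequently the first vanishing time of $\mathrm{BK}(\tilde R)$ on $\overline{W}$ may be attained at a boundary point of $\overline{W}$, where $\Delta_t$ of the relevant scalar need not be nonnegative, and the inequality $(\partial_t-\Delta_t)\tilde R(X_0,\bar X_0,Y_0,\bar Y_0)\le0$ — on which your whole contradiction $\beta t_0^{-\beta-1}\le C(n,E)$ rests — is unavailable. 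Your two proposed repairs do not close this. Running the argument on nested balls $\overline{B_0(x_0,\rho')}$ only restates the problem: the claimed $\rho'$-independent lower bound on the first vanishing time was derived under the assumption that the minimum is interior, and for every $\rho'$ the minimum may sit on $\partial B_0(x_0,\rho')$ (negativity creeping in from the region where you have no control), so no bound is ever established. The viscosity/envelope remark addresses only the nonsmoothness of the infimum over directions $X,Y$, not the absence of a spatial interior minimum. This is exactly why the paper multiplies the curvature, not $B$, by a compactly supported cutoff: with $A=\Psi R+fB$ and $\Psi$ built from the evolving distance $d(x,t)+C_1t$, one has $A=fB>0$ for $t>0$ wherever $\Psi=0$, so any first zero of $\inf_{X,Y}A_{X\bar XY\bar Y}$ is automatically interior and the parabolic maximum principle applies. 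The price is the cutoff-derivative error terms $R\,(\partial_t-\Delta)\Psi$ and $\langle\nabla R,\nabla\Psi\rangle$; controlling them at the zero point uses $\Psi\gtrsim f/E$, hence $|\log\Psi|\lesssim t^{-\beta}$, which forces the special cutoff with $|\phi'|\le C\phi|\log\phi|^{1+1/m}$, yields errors of size $f\,t^{-2\beta(1+1/m)}$, and is why the paper first only reaches $\beta<\tfrac12$ and then bootstraps (replacing $\epsilon t^{1/2}$ by $\epsilon\exp(-t^{-\sigma})$) to get all $\beta<1$. Your cleaner error term $C(n,E)f$, which lets you treat all $\beta<1$ in one stroke, is an artifact of the unjustified localization.

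A secondary gap: you assert a first vanishing time $t_0\in(0,T']$, but since $f(0)=0$ your perturbed tensor at $t=0$ is just $R(0)$, whose bisectional curvature is only nonnegative and will in general vanish in some direction, so the infimum is already $0$ at $t=0$ and it must be argued (not assumed) that it stays nonnegative for a short positive time and that a genuine zero with $t_0>0$ exists; your contradiction inequality gives no information at $t_0=0$ since $f'(0)=0$ as well. The paper handles this by adding the auxiliary term $\epsilon t^{1/2}$ (later $\epsilon\exp(-t^{-\sigma})$) to the barrier, deriving the contradiction uniformly in $\epsilon$, and letting $\epsilon\to0$; some analogue of this device, or an appeal to the argument of \cite{HuangTam2015} producing the interior zero time, is needed in your write-up as well.
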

\begin{proof} We first prove that the lemma is true for any $0<\beta<\frac12$.
Since the curvature of $g(t)$ is uniformly  bounded on $B_0(x_0,4r)\times[0,T]$, by Lemma \ref{l-distance}, there is $0<T'<T$ such that $B_t(x_0,2r)\subset B_t(x_0,2r)$ for $t\le T'$ and

  \be\label{e-distance-1}
(\frac{\p}{\p t}-\Delta)\lf(d(x,t)+C_1t\ri)\ge 0
\ee
in the barrier sense for $x\in B_t(x_0,3r)$ with $d(x,t)+C_1t\ge \frac r2$ for some $C_1>0$. By choosing a smaller $T'>0$, we may assume that $2r+C_1t\le 3r$.

 Let
$$
\Psi(x,t)=\phi(r^{-1}(d(x,t)+C_1t))
$$
where $\phi$ is the function defined in \eqref{e-cutoff-1} with $m$ is chosen so that $ \beta(1+1/m)<\frac12$.

 Let $\lambda(t)=\exp(-t^{-\beta}).$ For any $\e>0$, let
\bee
\begin{split}
A_{XYZW}:=A(X,Y,Z,W) =& \Psi(x,t) R(X,Y,Z,W)+  (\lambda(t)+\e t^\frac12)  B(X,Y,Z,W)\\
\end{split}
\eee
Let $f(t)=\lambda(t)+\e t^\frac12$.

 \bee
H(x,t)=\inf\{A_{X\bar XY\bar Y}(x,t)| |X|_t=|Y|_t=1, X,Y\in T^{(1,0)}_xM\}. \eee
As in \cite{HuangTam2015}, if $H<0$ somewhere, then   one can prove that there exist     $x_1\in M$ and $T'\ge t_1>0$ such that $H(x_1,t_1)=0$ and $H(x,t)\ge0$ on $M\times[0,t_1]$. So there is $X_0, Y_0$ with norm 1 so that by extending $X_0, Y_0$ near a neighborhood of $(x_1,t_1)$ in a suitable way, which are still denoted by $X_0, Y_0$, so that if $h(x,t)=A_{X_0\bar X_0Y_0\bar Y_0}(x,t)$, then $h(x_1,t_1)=0$ and $h(x,t)\ge0$ near $(x_1,t_1)$, and $t\le t_1$. Using the evolution equation of the distance function Lemma \ref{l-distance}, we may apply the maximum principle treating $h$  as smooth. Here we use the fact that $\phi'\le0$.
Hence at  $(x_1,t_1)$,
 \be\label{e1} \begin{split}
0\ge&(\ppt-\triangle)h\\
=& \Psi \heat  R +R    \heat \Psi\\
&-2\la \nabla R,\nabla \Psi\ra+f\p_t B +f'B\\
=&{\rm I+II+III+IV+V}.
\end{split}
\ee
Here we write $B=B(  X_0,\bar{  X_0},  Y_0,\bar{  Y_0})$ etc. In the following, we simply denote $t_1$ by $t$. In the following $C_i$ will denote positive constants which are independent of $\e$. Note that at $(x_1,t_1)$, $\Psi>0$ and $R(X_0,\bar X_0,Y_0,\bar Y_0)\le 0$.

Since $Q(A)\ge0$ by the null vector condition \cite{Bando1984,Mok} and the fact that $Q(B)\le 0$ \cite{HuangTam2015}, we have
\bee
\begin{split}
0\le&\Psi^2Q(R)+C(n)\Psi |\Rm| f.
\end{split}
\eee

So
\be\label{e2}
\mathrm{I}\ge -C_2 f.
\ee
Using the \KR flow equation and the fact that the curvature is bounded, we have
\be\label{e3}
\begin{split}
\mathrm{IV}\ge& -C_2f,
\end{split}
\ee
if we choose a larger $C_2$.

\bee
\begin{split}
\mathrm{II}\ge&-C_3|R|\Psi |\log \Psi|^{2(1+\frac1m)}.
\end{split}
\eee
where we have used \eqref{e-distance-1}, the fact that $\phi'\le0$, $R\le0$ and \eqref{e-cutoff-2}.

Now $A_{X_0\bar X_0Y_0\bar Y_0}=0$ at $(x_0,t_0)$ by the upper bound of $|\Rm|$, we have
$$
E \Psi\ge c(n)f\ge c(n)\lambda(t).
$$
$$
|\log \Psi|\le C t^{-\beta}.
$$
provided that $T'$ is small but independent of $\e$.
Hence
\be\label{e4}
\begin{split}
\mathrm{II}\ge  &-C_5f t^{-2\beta(1+\frac1m)}   .\\
\end{split}
\ee
 Since $\nabla h=0$ at $(x_1,t_1)$,  and $\nabla B=0$, we have

 \be\label{e5}
\begin{split}
\mathrm{III}\ge &-2|R|\frac{|\nabla\Psi|^2}{\Psi} \\
&\ge-C_5f t^{-2\beta(1+\frac1m)}  .\\
\end{split}
\ee
with a possible larger $C_5$.

\be\label{e6}
\mathrm{V}\ge C_6(\e t^{-\frac12}+\beta t^{-\beta-1}\lambda)
\ee

By \eqref{e1}--\eqref{e6}, we have

\bee
\begin{split}
0\ge& \e t^\frac12\lf(C_6t^{-1}-2C_5t^{-2\beta(1+\frac1m)}-2C_2\ri)+\lambda\lf(C_6\beta t^{-\beta-1}-2C_5t^{-2\beta(1+\frac1m)}-2C_2\ri)
\end{split}
\eee
Hence if $T'>0$ is small enough but independent of $\e$, this is a contradiction. Let $\e\to0$, we conclude that
$$
 \mathrm{BK}(g(t))\ge  -\Psi^{-1}\exp(-t^{-\beta})
$$
in $U\times(0,T']$ for some neighborhood of $x_0$ at the point where $\Psi\neq0$. Since this is true for all $\beta<\frac12$, by choosing a smaller $T'$, we conclude that the lemma is true for any $\beta<\frac12$.

The proof also implies that for any compact set $K$ in $B_0(x_0,4r)$ and for any $0<\beta<\frac12$, there is $T'>0$ so that $\mathrm{BK}(g(t))>-\exp(-t^{-\beta})$ for $T'>t>0$ in $K$.

For any $\beta<\frac12(1+\frac12)$ choose $\sigma<\frac12$  so that $1+\sigma> 2\beta$ and choose $m$ large, so that $2\beta(1+\frac1m)<1+\sigma$ and $2\beta(1+\frac1m)<\beta+1$. In the above computations, if we replace $\e t^\frac12$ with $\e \exp(-t^{-\sigma})$, then one can repeat the argument so that if $H<0$ somewhere, then there is a $(x_1,t_1)$ with $t_1>0$ so that
\bee
\begin{split}
0\ge&\e \exp(-t^{-\sigma})\lf(C_7t^{-1-\sigma}-C_8t^{-2\beta(1+\frac1m)}-C_8\ri)\\
&+
\exp(-t^{-\beta})\lf(C_7t^{-1-\beta}-C_8t^{-2\beta(1+\frac1m})-C_8\ri)
\end{split}
\eee
for some positive constants $C_7, C_8$ independent of $\e$. So we have a contradiction if $T'>0$ is small enough by the choice of $m, \sigma$.  Hence the lemma is true for $\beta<\frac12(1+\frac12).$

Continue in this way, we conclude that the lemma is true for $\beta<\sum_{i=1}^l 2^{-i} $ for any $l\ge1$. This implies the lemma is true.

\end{proof}

 \begin{proof}[Proof of Theorem \ref{t-lowerbound-BK-1}]
  In the following:

\begin{itemize}
  \item $c_i$ denote positive constants depending only on $n$.
  \item $C_i$ denote positive constants depending only on $n, a_0, \theta, \beta$.
\end{itemize}

  Let $1>\beta>\theta-1$. Choose $m$ large in \eqref{e-cutoff-1} so that
  \be
  \beta+1>2\beta(1+\frac1m).
  \ee
  This can be done because $\beta<1$. By Lemma \ref{l-lowerbd-1}, we conclude that for any compact set $K$ inside $B_0(x_0,4r)$, there is $\hat T>0$ such that on $K\times[0,\hat T]$,
  $$
   \mathrm{BK}(g(t))\ge -\exp(-r^\frac12 t^{-\beta})
   $$
   because $r\ge1$. First choose $T'(n,a_0,\theta)>0$ so that for $0<t\le T'\wedge T$, we have
   $B_t(x_0,3r)\subset B_0(x_0,4r)$ and such that
   \bee
   (\frac{\p}{\p t}-\Delta)(d(x,t)+C_1t^{1-\frac\theta2})\ge0
   \eee
   in the barrier sense for $x\in B_t(x_0,3r)$ with $d(x,t)+C_1t^{1-\frac\theta2}\ge r$.

   As in the proof of Lemma \ref{l-lowerbd-1}, \bee
\begin{split}
A_{XYZW}:=A(X,Y,Z,W) =& \Psi(x,t) R(X,Y,Z,W)+   \lambda(t)   B(X,Y,Z,W)\\
\end{split}
\eee
where $\lambda(t)=\exp(-r^{\frac12}t^{-\beta})$ and
$$
\Psi(x,t)=\phi\lf((2r)^{-1}(d(x,t)+C_1t^{1-\frac\theta2})\ri).
$$
Since $\phi'(s)=0$ for $0\le s\le\frac12$ and $\phi'\le 0$, we have
\be\label{e-distance-2}
 (\frac{\p}{\p t}-\Delta)\Psi\le C_2r^{-2}\Psi |\log \Psi|^{2(1+\frac1m)}
\ee
by \eqref{e-cutoff-2}.

Let
 \bee
H(x,t)=\inf\{A_{X\bar XY\bar Y}(x,t)| |X|_t=|Y|_t=1, X,Y\in T^{(1,0)}_xM\}. \eee

Then $H(x,t)>0$ for on $B_0(x_0,3r)$ if $t$ is small enough.

Argue as in the proof of Lemma \ref{l-lowerbd-1}, if $H(x,t)<0$ for some $x\in M$ and $t>0$, then there is a point $(x_1,t_1)$ with $t_1>0$ and $\Psi(x_1,t_1)>0$ so that

\be\label{ee1} \begin{split}
0\ge & \Psi \heat  R +R    \heat \Psi\\
&-2\la \nabla R,\nabla \Psi\ra+\lambda\p_t B +\lambda'B\\
=&{\rm I+II+III+IV+V}.
\end{split}
\ee
As before, denote $t_1$ simply by $t$ we have

\be\label{ee2}
\mathrm{I}\ge -C_2t^{-\theta}\lambda.
\ee
Using the \KR flow equation and the fact that the curvature is bounded, we have
\be\label{e3}
\begin{split}
\mathrm{IV}\ge& -C_2t^{-\theta}\lambda,
\end{split}
\ee
if we choose a larger $C_2$.

Using \eqref{e-distance-2} and the fact that $R\le 0$ at $(x_1,t_1)$, we have
\bee
\begin{split}
\mathrm{II}\ge&-C_3r^{-2}|R|\Psi |\log \Psi|^{2(1+\frac1m)}.
\end{split}
\eee

Now $A_{X_0\bar X_0Y_0\bar Y_0}=0$ at $(x_1,t_1)$ by the upper bound of $|\Rm|$, we have
$$
t^{-\theta} \Psi\ge C_4\lambda.
$$
$$
|\log \Psi|\le C_4r^\frac12 t^{-\beta}
$$
with a larger $C_4$, provided that $T'=T'(n,a_0,\theta)>0$ is small.
Hence
\be\label{ee4}
\begin{split}
\mathrm{II}\ge  &-C_5r^{-2+(1+\frac1m)} t^{-2\beta(1+\frac1m)} \lambda    \\
\ge& -C_5t^{-2\beta(1+\frac1m)})\lambda
\end{split}
\ee
because $r\ge1$.

 One also have

 \be\label{ee5}
\begin{split}
\mathrm{III}\ge &-C_5  t^{-2\beta(1+\frac1m)} \lambda .\\
\end{split}
\ee
with a possibly larger $C_5$.

\be\label{ee6}
\mathrm{V}\ge C_6r^\frac12 \beta t^{-1-\beta}\lambda.
\ee
 Combining \eqref{ee1}--\eqref{ee6}, we have at $(x_1,t_1)$,
 \be\label{ee7}
 \begin{split}
 0\ge&\lambda\lf(C_6r^\frac12\beta t^{-1-\beta}-2C_2t^{-\theta}-2C_5t^{-2\beta(1+\frac1m)} \ri)
 \end{split}
 \ee
 Since $\beta+1>\theta$, and $\beta+1>2\beta(1+\frac1m)$, we conclude that there is a $T_0(n,a_0,\theta,\beta)$ (because $m$ depends on $\beta$) so that if $t\le T_0\wedge T$, then \eqref{ee7} is impossible.  Since $\Psi>0$ if $d(x,t)+C_1 t^{1-\frac\theta2}<2r$, we conclude that the theorem is true provided that $T_0(n,a_0,\theta,\beta)$ is small, because $r\ge 1$.

 \end{proof}

\begin{proof}[Proof of Theorem \ref{t-nonnegative}]  Fix $T_0\wedge T\ge t>0$ and let $r\to \infty$ in Theorem \ref{t-lowerbound-BK-1}, where $T_0$ is as in the theorem, we have
$$
\mathrm{BK}(g(t))\ge0
$$
for $t\le T_0\wedge T.$ For $T\ge t>T_0\wedge T$ one can use the standard result in \cite{Shi1997} or just iterate Theorem \ref{t-lowerbound-BK-1} to conclude that $\mathrm{BK}(g(t))$ is also nonnegative.
\end{proof}

\section{curvature estimates}

In this section, we want to  discuss     curvature bound in Theorem \ref{t-nonnegative}.

Let $(M^n,g_0)$ be a complete noncompact \K manifold with complex dimension $n$. Let $g(t)$ be a smooth complete solution to  the \KR flow on $M\times[0,T], T>0$, with initial data $g(0)=g_0$. Let $V_t(x,r)$ be the volume of geodesic ball of radius $r$ with center at $x$ with respect to the metric $g(t)$. In this section we  will prove the following:

\begin{thm}\label{t-curvature-est-1} Let $(M^n,g(t))$ be as above.
  Suppose

  \begin{enumerate}
    \item [(i)] $\mathrm{BK}(g(0))\ge0$;
    \item [(ii)] $V_0(x,1)\ge v_0>0$ for all $x\in M$ for some $v_0>0$; and
    \item [(iii)] the curvature of $g(t)$ is uniformly bounded on $M\times(\tau, T]$ for all $\tau>0$.
  \end{enumerate}
     Then $|\Rm(g(t))|\le \displaystyle{\frac at}$  on $M$ for some $a>0$ for all $t\in (0,T]$. In particular, $\mathrm{BK}(g(t))\ge0$ for all $t\in (0,T]$.

\end{thm}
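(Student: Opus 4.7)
The plan is to establish the pointwise bound $|\Rm(g(t))|\le a/t$ by a blow-up/contradiction argument patterned on Simon-Topping \cite{SimonTopping2016}; the concluding assertion $\mathrm{BK}(g(t))\ge 0$ for $t\in(0,T]$ then follows immediately from Theorem \ref{t-nonnegative} applied with $\theta=1$. As a first reduction, note that condition (iii) gives $|\Rm(g(t))|\le K(T_0)$ on $M\times[T_0,T]$, hence $|\Rm(g(t))|\le K(T_0)T/t$ there; the content of the theorem is therefore the bound on the short-time regime $(0,T_0]$ for a sufficiently small $T_0>0$ to be chosen.

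Assume for contradiction that no constant $a>0$ works on $(0,T_0]$, so there exist $(x_k,t_k)\in M\times(0,T_0]$ with $t_k\downarrow 0$ and $t_k|\Rm(g(t_k))|(x_k)\to\infty$. A standard parabolic point-picking replaces this sequence by new points (still denoted $(x_k,t_k)$) for which, setting $Q_k:=|\Rm(g(t_k))|(x_k)$, one has $Q_k\to\infty$, $t_kQ_k\to\infty$, and $|\Rm(g(s))|\le 4Q_k$ on parabolic cylinders about $(x_k,t_k)$ of rescaled size tending to infinity. Rescaling by $\tilde g_k(s):=Q_kg(t_k+s/Q_k)$, $s\in[-t_kQ_k,0]$, gives Ricci flows with $|\Rm(\tilde g_k(0))|(x_k)=1$ and $|\Rm(\tilde g_k)|\le 4$ on parabolic neighborhoods of $(x_k,0)$ that exhaust $\tilde M_\infty\times(-\infty,0]$ in the limit.

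To pass to a limit one needs uniform $\kappa$-noncollapsing of the $\tilde g_k$. For this I apply Theorem \ref{t-lowerbound-BK-1} locally on balls $B_0(x_k,4r)$ for suitable $r$, using as input a preliminary estimate $|\Rm(g(t))|\le a/t^\theta$ with $\theta<2$ extracted from the smoothness of $g(0)$ on compact sets together with (iii); letting $r\to\infty$ yields $\mathrm{BK}(g(t))\ge 0$ on $M\times(0,T_0]$. In particular $\mathrm{Ric}(g(t))\ge 0$, so distances are non-increasing along the flow and the hypothesis $V_0(x,1)\ge v_0$ propagates via Bishop-Gromov to $V_t(y,\sqrt{t})\ge c\,t^{n}$ on the scales relevant for the blow-up. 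Hamilton's compactness theorem then produces a smooth complete ancient \KR flow $(\tilde M_\infty,\tilde g_\infty(s),x_\infty)$, $s\in(-\infty,0]$, with $|\Rm|\le 4$, $|\Rm(\tilde g_\infty(0))|(x_\infty)=1$, $\mathrm{BK}\ge 0$, and $\kappa$-noncollapsed on all scales.

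Finally I derive a contradiction from this ancient limit. The combination $\mathrm{BK}\ge 0$ and $\kappa$-noncollapsing for an ancient solution forces a positive asymptotic volume ratio, preserved along the flow, so at sufficiently negative time $s=-S$ one can find $R=R(S)$ large with $V_{\tilde g_\infty(-S)}(y,R)\ge(1-\delta)\omega_{2n}R^{2n}$ at an appropriate $y$, where $\delta>0$ is the pseudo-locality constant from Theorem \ref{t-TW} matched to a preassigned small $\alpha>0$. Applying Theorem \ref{t-TW} at scale $R$ to the ancient flow from $s=-S$ then yields $|\Rm(\tilde g_\infty(s))|(y)\le\alpha/(s+S)$ on the rescaled scale; propagating this to $x_\infty$ and taking $S\to\infty$ contradicts $|\Rm(\tilde g_\infty(0))|(x_\infty)=1$. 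The principal obstacle is the bootstrap step that furnishes Theorem \ref{t-lowerbound-BK-1} with a global-in-$M$ preliminary estimate $|\Rm|\le a/t^\theta$, $\theta<2$: condition (iii) alone gives no quantitative blow-up rate as $\tau\to 0$, and reconciling the local application of Theorem \ref{t-lowerbound-BK-1} with the blow-up contradiction is the technical heart of the Simon-Topping method as adapted here.
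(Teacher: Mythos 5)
There is a genuine gap, and it sits exactly where you flag it yourself: the step that feeds Theorem \ref{t-lowerbound-BK-1} with a preliminary bound $|\Rm(g(t))|\le a t^{-\theta}$, $\theta<2$, valid on $M\times(0,T_0]$. Hypothesis (iii) gives a uniform curvature bound on $M\times[\tau,T]$ for each fixed $\tau>0$ but with no rate as $\tau\to0$, and smoothness of $g(0)$ on compact sets gives at best local, non-uniform short-time bounds; neither produces the global $a/t^{\theta}$ input. Since your noncollapsing/volume-propagation step (Bishop--Gromov under $\Ric\ge0$) and your compactness step both rest on first knowing $\mathrm{BK}(g(t))\ge0$, the whole blow-up argument is conditional on the missing bootstrap, so the proof as proposed is incomplete. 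The paper avoids this circle: it never proves $\mathrm{BK}\ge0$ first. Instead (Theorem \ref{t-curvature-est-2}) it runs the Simon--Topping contradiction scheme \cite{SimonTopping2016}: by their Lemma 4.2, either the desired bound $|\Rm|\le a_0/t$ already holds on the relevant balls, or there is a first point $(z_0,t_0)$ with $|\Rm|(z_0,t_0)=a_0/t_0$ and $|\Rm(g(t))|\le a_0/t_0$ for all earlier times on a large ball around $z_0$; after parabolic rescaling this is precisely the $\theta=1$ hypothesis (ii) of Theorem \ref{t-lowerbound-BK-1}, which then yields $\mathrm{BK}\ge-1$ locally, Lemma \ref{l-curvest-1} upgrades this to $|\Rm|\le\Theta/s$, and Lemma \ref{l-curvest-2} (Perelman's pseudolocality, Theorem \ref{t-pseudolocal} --- this is where (iii) is actually needed, to legitimize applying pseudolocality) improves the bound at $(z_0,t_0)$ to $(\e\a)^{-2}\Theta/t_0<a_0/t_0$, a contradiction. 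The point-picking itself manufactures the $a/t$ hypothesis; no independent bootstrap is required.

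Your concluding blow-up contradiction is also not sound as written. The claim that $\mathrm{BK}\ge0$ plus $\kappa$-noncollapsing forces a positive asymptotic volume ratio for the ancient limit is unjustified (for Riemannian ancient $\kappa$-solutions the asymptotic volume ratio is in fact zero), and Theorem \ref{t-TW} needs an almost-Euclidean volume ratio $(1-\delta)\omega_{2n}$, which is far stronger than noncollapsing. What actually closes this step in the paper (inside Lemma \ref{l-curvest-1}, following \cite[Lemma 2.1]{SimonTopping2016}) is the opposite mechanism: the volume hypothesis $V_0(x,1)\ge v_0$ is propagated through the rescalings (via the lower bisectional bound and \cite[Lemma 2.3]{SimonTopping2016}) so that the ancient limit has maximal volume growth, and then Ni's theorem \cite{Ni2005} --- a nonflat ancient K\"ahler--Ricci flow with bounded nonnegative bisectional curvature cannot have maximal volume growth --- forces the limit to be flat, contradicting $|\Rm|=1$ at the basepoint. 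If you want to salvage your outline, replace the AVR/Tian--Wang step by this Ni-based argument, and replace the global bootstrap by the Simon--Topping point-picking lemma; at that stage you will essentially have reproduced the paper's proof.
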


Theorem \ref{t-curvature-est-1} is a direct consequence of the following pseudolocal result and Theorem \ref{t-nonnegative}.
\begin{thm}\label{t-curvature-est-2}
 Let $(M^n,g(t))$ be a smooth  complete solution to the \K Ricci flow on $M\times [0,T]$ with $T>0$. Let $x_0\in M$. Suppose
\begin{enumerate}
  \item [(i)] $V_0(x,r)\ge v_0r^{2n}>0$ for all $x\in B_0(x_0,3)$ and $r\le 2$;
  \item [(ii)] $\mathrm{BK}(g(0))\ge 0$ on $B_0(x_0,8)$;
  \item [(iii)] for any $T>\tau>0$, the curvature of $g(t)$ is uniformly bounded on $M\times[\tau,T]$.
\end{enumerate}
Then there exist $a_0(n,v_0)>0$, $T^{**}(n,v_0)>0$ such that
$$
|\Rm(g(t))|\le \displaystyle{\frac {a_0}t}
$$
on  $B_t(x_0,1)$ for all $0<t\le T\wedge T^{**}$.
\end{thm}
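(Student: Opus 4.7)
The plan is to argue by contradiction via a parabolic blowup in the spirit of Simon--Topping, reducing to the pseudo-locality theorem of Tian--Wang (Theorem \ref{t-TW}). Suppose the conclusion fails; then for $a_{0,i}\to\infty$ and $T^{**}_i\to 0$ there exist K\"ahler--Ricci flows $(M_i,g_i(t))$ satisfying (i)--(iii) with the same $v_0$, basepoints $x_{0,i}\in M_i$, and points $(y_i,t_i)$ with $y_i\in B_{t_i}(x_{0,i},1)$, $0<t_i\le T\wedge T^{**}_i$, and $t_i|\Rm(g_i(t_i))|(y_i)\to\infty$. Using the distance-distortion estimates of Lemma \ref{l-distance}, I would apply a Perelman-type point-picking in the parabolic cylinder $B_{t_i}(x_{0,i},2)\times(0,t_i]$ to produce blowup points $(\bar y_i,\bar t_i)$ such that, writing $\lambda_i:=|\Rm(g_i(\bar t_i))|(\bar y_i)$ and $Q_i:=\bar t_i\lambda_i$, one has $Q_i\to\infty$ and $|\Rm(g_i(t))|\le 4\lambda_i$ on a parabolic neighborhood of $(\bar y_i,\bar t_i)$ whose rescaled size tends to infinity. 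Parabolically rescale via $\tilde g_i(s):=\lambda_i\,g_i(\bar t_i+s/\lambda_i)$, so $|\tilde\Rm_i|(\bar y_i,0)=1$ and $|\tilde\Rm_i|\le 4$ on the rescaled neighborhood.

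For passing to a Cheeger--Hamilton limit I would use two ingredients. First, hypothesis (i) gives $V_{g_i(0)}(x,r)\ge v_0 r^{2n}$ for $r\le 2$, and the uniform curvature bound on the blowup neighborhood transports this non-collapsing forward to time $\bar t_i$, yielding an injectivity-radius lower bound on $\tilde g_i(0)$ at $\bar y_i$. Second, almost-nonnegativity of bisectional curvature of $\tilde g_i$ comes from applying Theorem \ref{t-lowerbound-BK-1} to $g_i$ with $\theta=1$ and some fixed $\beta\in(0,1)$ on balls centered at $\bar y_i$ of radius $r_i\to\infty$ (feasible since $\bar y_i$ stays in a bounded $g_i(0)$-region around $x_{0,i}$, and $\mathrm{BK}(g_i(0))\ge 0$ on $B_0(x_{0,i},8)$); the resulting lower bound $-\exp(-r_i^{1/2}\bar t_i^{-\beta})$, rescaled by $\lambda_i^{-1}$, tends to zero. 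Passing to a subsequence yields a complete ancient K\"ahler--Ricci flow $(\tilde M_\infty,\tilde g_\infty(s),\tilde y_\infty)$ on $(-\infty,0]$ with uniformly bounded curvature, $\mathrm{BK}(\tilde g_\infty)\ge 0$, volume non-collapsing at all scales (by Bishop--Gromov, since $\Ric\ge 0$), and the normalization $|\tilde\Rm_\infty|(\tilde y_\infty,0)=1$.

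To derive a contradiction I would apply pseudo-locality to this ancient limit. For $S>0$ large, smoothness of $\tilde g_\infty(-S)$ together with the Bishop--Gromov monotonicity produce a scale $r_\delta>0$ at which $V_{\tilde g_\infty(-S)}(\tilde y_\infty,r_\delta)\ge(1-\delta)\omega_{2n}r_\delta^{2n}$ and $\Ric(\tilde g_\infty(-S))\ge-\delta^2\tilde g_\infty(-S)$ on the ball of radius $r_\delta$. Rescaling this scale to unity, Theorem \ref{t-TW} furnishes a curvature bound on a forward parabolic neighborhood of $(\tilde y_\infty,-S)$, and an iterative application of pseudo-locality --- using the preserved volume non-collapsing and $\mathrm{BK}\ge 0$ of the limit --- propagates the estimate to $s=0$, producing $|\tilde\Rm_\infty|(\tilde y_\infty,0)\le C\alpha$ with $\alpha$ arbitrarily small, contradicting the normalization. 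The main obstacle will be this final iteration: each application of Theorem \ref{t-TW} yields a curvature bound only on a time interval commensurate with the admissible scale $r_\delta$, and ensuring that this scale does not degenerate as one iterates forward to $s=0$ is the heart of the argument, where Theorem \ref{t-lowerbound-BK-1} (securing $\mathrm{BK}\ge 0$ on the limit) and the uniform volume non-collapsing from hypothesis (i) both play crucial roles.
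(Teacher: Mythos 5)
There is a genuine gap, and it is the central one: your blowup scheme is circular at the point where you invoke Theorem \ref{t-lowerbound-BK-1}. That theorem's hypothesis (ii) demands a curvature bound of the form $|\Rm(g_i(t))|\le a t^{-\theta}$, with $a\le a_0$ fixed, on a ball times the \emph{whole} interval $(0,T]$, i.e.\ down to $t=0$; but in your contradiction setup such a bound is exactly what is being proved, and the Perelman-type point-picking you describe only yields $|\Rm|\le 4\lambda_i$ on a parabolic neighbourhood of $(\bar y_i,\bar t_i)$, not on $(0,\bar t_i]$. For the same reason the claim that ``the uniform curvature bound on the blowup neighbourhood transports the non-collapsing forward to time $\bar t_i$'' fails: transferring the time-zero volume bound to time $\bar t_i$ requires metric (e.g.\ Ricci lower bound) control on all of $[0,\bar t_i]$, which you do not have without first knowing almost-nonnegativity of $\mathrm{BK}$, which in turn needs the $a/t$ bound. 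The paper resolves precisely this chicken-and-egg problem by using the Simon--Topping dichotomy (\cite[Lemma 4.2]{SimonTopping2016}) rather than a naive blowup: one fixes $a_0=((\e\a)^{-2}+1)\Theta$ in advance, and in the bad case obtains by construction a point $(z_0,t_0)$ with $|\Rm(z_0,t_0)|=a_0/t_0$ \emph{and} $|\Rm(g(t))|\le a_0/t_0\le a_0/t$ on a large ball for \emph{all} $t\in(0,\a^2 t_0]$; after rescaling $t_0\mapsto T^*$ this supplies the hypotheses of Theorem \ref{t-lowerbound-BK-1} (giving $\mathrm{BK}\ge-1$), then of Lemma \ref{l-curvest-1} (giving $\Theta/s$), and then the pseudolocality improvement Lemma \ref{l-curvest-2} gives $|\Rm(z_0,T^*)|\le(\e\a)^{-2}\Theta/T^*$, contradicting the chosen value of $a_0$.

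Your final step is also not justified, as you partly concede: killing the ancient limit by iterating Theorem \ref{t-TW} from $s=-S$ to $s=0$ cannot work as stated, because the limit is only $v_0$-non-collapsed, not almost-Euclidean, so the scale $r_\delta$ at which the Tian--Wang hypotheses hold is point-dependent and can degenerate along the iteration; making this work would amount to reproving a Perelman/Ni-type classification of ancient solutions by pseudolocality alone. The paper avoids this entirely: the ancient solution is ruled out by Ni's theorem \cite{Ni2005} (a nonflat ancient K\"ahler--Ricci flow with bounded nonnegative bisectional curvature cannot have maximal volume growth), which is packaged inside Lemma \ref{l-curvest-1}, and pseudolocality (Perelman's version, Theorem \ref{t-pseudolocal}, not Tian--Wang) is used only once, in the bootstrap improvement Lemma \ref{l-curvest-2}. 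To repair your argument you would need both the dichotomy-type point-picking that provides curvature control at all earlier times and a replacement of the pseudolocality iteration by an appeal to Ni's theorem together with a verified transfer of maximal volume growth to the limit.
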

\begin{rem} Condition (i) can be replaced by $V_0(x_0,1)\ge v_0$ by volume comparison. In this case,  (i) will be true with $v_0$   replaced by $c(n)v_0$ for some $c(n)>0$.
\end{rem}

We need the following lemma, which was basically proved by Simon-Topping \cite{SimonTopping2016}. Their result is on Ricci flow on a Riemannian three manifold using the result of Perelman \cite{Perelman-1} on nonflat ancient solution. In our case, we just appeal to a result by Ni \cite{Ni2005} which states that  every complete noncompact nonflat ancient solutions to the \KR flow with nonnegative and bounded bisectional curvature cannot have maximum volume growth.
\begin{lma}\label{l-curvest-1}
Suppose that $(M^{n},g(t))$ is a smooth complete solution to the \K Ricci flow for $t\in [0,T]$, $T>0$.   Suppose further there exist  $K\geq 0,\, v_0>0$ such that
\begin{enumerate}
  \item [(i)]  $V_0(x_0,r)\geq v_0r^{2n}$ for $r\le 2$; and
  \item [(ii)] $\mathrm{BK}(g(t))\geq -K$ on $B_t(x_0,2)$ for all $t\in [0,T]$.
\end{enumerate}
Then   there exist $ T_1(n,K,v_0)>0$ and $\Theta(n,K,v_0)>0$ such that for all $0<t\le T\wedge T_1$,
$$ |Rm(g(t))|\le \frac{\Theta}{t}
$$
on $B_t(x_0,1).$
\end{lma}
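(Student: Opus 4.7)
The plan is to argue by contradiction through a blow-up analysis, following the Simon--Topping argument \cite{SimonTopping2016} but replacing their appeal to Perelman's classification of three-dimensional $\kappa$-noncollapsed ancient solutions with Ni's theorem \cite{Ni2005}, which excludes complete noncompact nonflat ancient \KR flows with bounded nonnegative bisectional curvature and maximum volume growth. Suppose the conclusion fails. Then there exist a sequence of pointed \KR flows $(M_k^n, g_k(t), x_{0,k})$ on $[0, T^{(k)}]$ satisfying the hypotheses with the same $K$ and $v_0$, and points $(y_k, s_k)$ with $y_k \in B_{s_k}(x_{0,k}, 1)$, $s_k \downarrow 0$, such that $s_k |\Rm(g_k(s_k))|(y_k) \to \infty$.

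A Perelman/Hamilton-type point-picking argument inside the parabolic neighborhood then produces new points $(x_k, t_k)$ with $t_k \le s_k$, $x_k \in B_{t_k}(x_{0,k}, \tfrac{3}{2})$, such that $Q_k := |\Rm(g_k(t_k))|(x_k) \to \infty$, $t_k Q_k \to \infty$, and $|\Rm(g_k(t))| \le 4 Q_k$ on a parabolic neighborhood of $(x_k, t_k)$ of space-scale $\sim Q_k^{-1/2}$ and time-scale $\sim Q_k^{-1}$. Rescale by $\tilde g_k(s) := Q_k\, g_k(t_k + Q_k^{-1} s)$ for $s \in [-t_k Q_k, 0]$, so that $|\Rm(\tilde g_k(0))|(x_k) = 1$, curvature is bounded by $4$ on a large parabolic neighborhood of $(x_k, 0)$, and $\mathrm{BK}(\tilde g_k) \ge -K/Q_k \to 0$. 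The control $t_k Q_k \to \infty$ ensures that the rescaled time-interval exhausts $(-\infty, 0]$ in the limit, so any limit is ancient.

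The non-collapsing is obtained as follows. The assumption $\mathrm{BK}(g_k(t)) \ge -K$ on $B_t(x_{0,k},2)$ yields $\Ric(g_k(t)) \ge -(n+1)K$ there, hence from the flow equation $g_k(t) \ge e^{-CKt} g_k(0)$ and distance comparison one propagates the initial Euclidean volume lower bound in (i) forward to uniform bounds $V_{g_k(t)}(x, r) \ge \tfrac12 v_0 r^{2n}$ on $B_{g_k(t)}(x_{0,k}, 3/2)$ for $r \le 1$ and all $t \le T_1(n, K, v_0)$ small. In the rescaled metric this becomes $V_{\tilde g_k(0)}(x_k, R) \ge \tfrac12 v_0 R^{2n}$ for $R \le \sqrt{Q_k} \to \infty$. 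Combined with Shi's local derivative estimates and Hamilton's compactness theorem, a subsequence of $(M_k, \tilde g_k(s), x_k)$ converges in the pointed Cheeger--Gromov--Hamilton sense to a smooth complete ancient \KR flow $(M_\infty, g_\infty(s), x_\infty)$ on $(-\infty, 0]$, with $|\Rm(g_\infty(0))|(x_\infty) = 1$, $\mathrm{BK}(g_\infty) \ge 0$ and bounded, and $g_\infty(0)$ of maximum volume growth; the limit is \K because the complex structure is preserved by the flow.

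Ni's theorem \cite{Ni2005} now forces $g_\infty$ to be flat, contradicting $|\Rm(g_\infty(0))|(x_\infty) = 1$; this completes the argument. The main obstacle is the non-collapsing/maximum-volume-growth step: one has to use the Ricci lower bound coming from $\mathrm{BK} \ge -K$ to transport the $t=0$ Euclidean volume estimate both forward to time $t_k$ and across all scales up to $r \sim 1$, so that after rescaling by $\sqrt{Q_k}$ the limit inherits true Euclidean volume growth at every scale (not just $\kappa$-noncollapsing). Once this is secured, the rest of the proof is a routine blow-up and compactness argument.
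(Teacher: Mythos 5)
Your overall architecture coincides with the paper's: the paper proves Lemma \ref{l-curvest-1} simply by running the blow-up/contradiction argument of Lemma 2.1 of \cite{SimonTopping2016} verbatim, with Perelman's classification of three-dimensional ancient solutions replaced by Ni's theorem \cite{Ni2005} that a complete noncompact nonflat ancient K\"ahler--Ricci flow with bounded nonnegative bisectional curvature cannot have maximal volume growth. Your contradiction set-up, point-picking, rescaling, Cheeger--Gromov--Hamilton compactness, and the final appeal to \cite{Ni2005} are exactly that scheme.

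There is, however, a genuine gap at the step you yourself call the main obstacle: non-collapsing at the blow-up points. You assert that $\Ric(g_k(t))\ge -(n+1)K$ yields $g_k(t)\ge e^{-CKt}g_k(0)$ and hence that the $t=0$ Euclidean volume bound propagates forward. The inequality is in the wrong direction: under the flow a lower Ricci bound gives $\partial_t g\le CKg$, i.e. $g_k(t)\le e^{CKt}g_k(0)$; the lower bound $g_k(t)\ge e^{-ct}g_k(0)$ would require an \emph{upper} Ricci bound, which is unavailable (it is essentially what the lemma is trying to establish). So the metric can a priori shrink where the curvature is large and positive, and the time-zero volume bound does not persist to time $t_k$ by any soft metric or distance comparison; ruling out instantaneous volume collapse under only a lower curvature bound is precisely the hard content of \cite{SimonTopping2016}. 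In the proof of their Lemma 2.1, which the paper imports wholesale, this is achieved through the structure of the point-picking (so that a curvature bound of the form $c/t$ is available at times before the chosen blow-up time) combined with their volume-propagation results (cf.\ Lemma 2.3 there), and it is this, after rescaling, that gives the limit the maximal volume growth needed to invoke \cite{Ni2005}. As written, your mechanism for this step fails, while the remaining steps are indeed routine; so the proposal is the right strategy but with the crucial step unproved.
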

\begin{proof} The proof is exactly the same as the proof of Lemma 2.1 in \cite{SimonTopping2016}, except in our case, we will use the result in \cite{Ni2005} as described above.
\end{proof}
\begin{cor}
Suppose that $(M^{n},g(t))$ is a smooth complete solution to the \K Ricci flow for $t\in [0,T]$, $T>0$.   Suppose further there exist  $K\geq 0,\, v_0>0$ such that
\begin{enumerate}
  \item [(i)]  $V_0(x,1)\geq v_0$ for all $x\in M$; and
  \item [(ii)] $\mathrm{BK}(g(t))\geq -K$ on $M$ for all $t\in [0,T]$.
\end{enumerate}
Then   there exist $ T_1(n,K,v_0)>0$ and $\Theta(n,K,v_0)>0$ such that for all $0<t\le T\wedge T_1$,
$$ |Rm(g(t))|\le \frac{\Theta}{t}.
$$
\end{cor}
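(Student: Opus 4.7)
The plan is to derive the corollary from Lemma~\ref{l-curvest-1} by upgrading the uniform unit-ball volume bound to a Euclidean-type volume lower bound valid on all scales $r\le 2$, and then applying the lemma pointwise.

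First, I would note that a lower bound on bisectional curvature automatically yields a Ricci lower bound: since $R_{\alpha\bar\alpha}=\sum_{\beta}R_{\alpha\bar\alpha\beta\bar\beta}$ in a unitary frame, the hypothesis $\mathrm{BK}(g(0))\ge -K$ gives $\Ric(g(0))\ge -c(n)K\,g(0)$ on $M$. Bishop--Gromov volume comparison in real dimension $2n$, combined with $V_0(x,1)\ge v_0$, then furnishes a constant $\tilde v_0=\tilde v_0(n,K,v_0)>0$ with $V_0(x,r)\ge \tilde v_0\,r^{2n}$ for every $x\in M$ and every $0<r\le 1$. For $1\le r\le 2$ the estimate $V_0(x,r)\ge V_0(x,1)\ge v_0\ge v_0\,2^{-2n}r^{2n}$ is immediate, so after shrinking $\tilde v_0$ if necessary one obtains $V_0(x,r)\ge \tilde v_0\,r^{2n}$ for all $r\le 2$.

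With this in hand, I would fix an arbitrary $x\in M$ and apply Lemma~\ref{l-curvest-1} with $x_0:=x$: hypothesis (i) of the lemma holds with constant $\tilde v_0$ by the previous step, and hypothesis (ii) is immediate from the global lower bound $\mathrm{BK}(g(t))\ge -K$. The lemma then produces $T_1(n,K,\tilde v_0)=T_1(n,K,v_0)$ and $\Theta(n,K,v_0)$ independent of $x$ such that $|\Rm(g(t))|\le \Theta/t$ on $B_t(x,1)$ for all $0<t\le T\wedge T_1$. Since $x\in B_t(x,1)$ and $x$ was arbitrary, this gives the desired bound at every point of $M$.

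The only substantive point is checking that the Bishop--Gromov step yields a constant $\tilde v_0$ depending only on $n$, $K$, $v_0$, so that the constants $T_1$ and $\Theta$ furnished by the lemma also depend only on these three parameters; this is a standard comparison-geometry computation and presents no real obstacle. No analysis beyond Lemma~\ref{l-curvest-1} is required.
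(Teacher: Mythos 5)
Your proposal is correct and is essentially the argument the paper intends: the corollary is stated as an immediate consequence of Lemma \ref{l-curvest-1}, with the unit-ball volume hypothesis upgraded to an all-scales bound $V_0(x,r)\ge \tilde v_0(n,K,v_0)r^{2n}$ via the Ricci lower bound coming from $\mathrm{BK}(g(0))\ge -K$ and Bishop--Gromov (the same volume-comparison device the authors invoke in their remark after Theorem \ref{t-curvature-est-2}), followed by a pointwise application of the lemma with constants independent of the center.
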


Given Theorem \ref{t-lowerbound-BK-1}, Lemma \ref{l-curvest-1} and Lemma 2.3 in \cite{SimonTopping2016} (see also \cite{Simon2012}).  One can proceed as in \cite{SimonTopping2016}. Let us first prove the following:

\begin{lma}\label{l-curvest-2} For any $v_0>0$, $\Theta>0$, there is $\e(n,v_0)>0$ and $T_2(n, v_0,\Theta)>0$ with the following property: Suppose  $(M^n,g(t))$ is a smooth complete noncompact solution to the \KR flow on $M\times[0,T]$ with $0<T\le T_2$ and $x_0\in M$ such that
\begin{enumerate}
  \item [(i)] $|\Rm(g(t))|\le \Theta/t$ on $B_t(x_0,1)$, $t\in (0,\a^2T]$;
  \item [(ii)] $V_0(x_0,r)\ge  v_0r^{2n}$  for all $r\le \frac12$; and
  \item [(iii)] $|Rm(g(t))|$ is uniformly bounded on $M\times(\tau, T]$ for all $\tau>0$.
  \item [(iv)] $\mathrm{BK}(g(0))\ge 0$ on $B_0(x_0,8)$.
\end{enumerate}
Then $|\Rm(x_0,T)|\le (\e\a)^{-2}\Theta/T$, where $0<\a<1$ is defined by the relation $$\e^2\a^2=(1-\a^2)\Theta.$$
\end{lma}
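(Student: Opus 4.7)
We argue by contradiction in the style of Simon--Topping. Fix $\e>0$ (depending only on $n,v_0$, to be chosen small at the end) with $\a\in(0,1)$ determined by $\e^2\a^2=(1-\a^2)\Theta$. If no admissible $T_2$ exists for this $\e$, then for each $k$ there is a complete \KR flow $(M_k^n,g_k(t),x_0^k)$ on $[0,T_k]$ with $T_k\downarrow 0$ satisfying (i)--(iv) but violating the conclusion at $(x_0^k,T_k)$. Parabolically rescale $\tilde g_k(t)=T_k^{-1}g_k(T_k t)$ to flows on $[0,1]$ satisfying $|\Rm_{\tilde g_k}|\le \Theta/t$ on $B_{\tilde g_k(t)}(x_0^k,T_k^{-1/2})$ for $t\in(0,\a^2]$, $V_{\tilde g_k(0)}(x_0^k,r)\ge v_0 r^{2n}$ for $r\le (2\sqrt{T_k})^{-1}$, $\mathrm{BK}(\tilde g_k(0))\ge 0$ on $B_0(x_0^k,8T_k^{-1/2})$, and $|\Rm_{\tilde g_k}(x_0^k,1)|>(\e\a)^{-2}\Theta$; all the relevant radii diverge as $k\to\infty$.

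By Hamilton's pointed Cheeger--Gromov compactness, together with distance control coming from Lemma \ref{l-distance}, extract a subsequential smooth pointed limit $(M_\infty^n,g_\infty(t),x_0^\infty)$ on $M_\infty\times(0,\a^2]$: a complete \KR flow with $|\Rm_{g_\infty}|\le \Theta/t$ globally and $V_{g_\infty(0^+)}(x_0^\infty,r)\ge v_0 r^{2n}$ for every $r>0$. To pass nonnegativity of the bisectional curvature to the limit, apply Theorem \ref{t-lowerbound-BK-1} to $\tilde g_k$ with a fixed $r\ge 1$ (the threshold $T_0$ there depends only on $n,\Theta,\beta$, so is uniform in $k$), pass to the limit, then let $r\to\infty$ using completeness of $M_\infty$; the remainder of $(0,\a^2]$ is filled in by applying Theorem \ref{t-nonnegative} to time-shifted portions of $g_\infty$ on which the curvature is bounded. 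Lemma \ref{l-curvest-1} (with $K=0$) applied to $g_\infty$ then yields constants $T_1=T_1(n,v_0)$ and $\Theta_1=\Theta_1(n,v_0)$ with $|\Rm_{g_\infty}(t)|\le \Theta_1/t$ on $B_{g_\infty(t)}(x_0^\infty,1)$ for $t\le T_1\wedge \a^2$.

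Choose $\e$ small enough that $\a^2=\Theta/(\Theta+\e^2)>T_1$; then at $t=T_1$ the limit has bounded curvature on the unit ball. Using a local Shi-type curvature doubling estimate (as in Lemma~2.3 of \cite{SimonTopping2016}), iterated with Perelman's $\kappa$-no-local-collapsing applied at each step to keep a uniform volume lower bound, extend the curvature bound forward through finitely many doublings to reach $t=1$, obtaining $|\Rm_{g_\infty}(x_0^\infty,1)|\le C(n,v_0)$. By stability of the curvature bound this uniform control passes back to $\tilde g_k$ for large $k$, extending the smooth Cheeger--Gromov convergence $\tilde g_k\to g_\infty$ to $[0,1]$; the inequality $|\Rm_{\tilde g_k}(x_0^k,1)|>(\e\a)^{-2}\Theta=1/(1-\a^2)$ then passes to $|\Rm_{g_\infty}(x_0^\infty,1)|\ge 1/(1-\a^2)$. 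Since $1/(1-\a^2)\to\infty$ as $\e\to 0$, choosing $\e=\e(n,v_0)$ small enough that $1/(1-\a^2)>C(n,v_0)$ contradicts the iterated bound. The principal difficulty is the volume propagation through the forward iteration: Perelman's $\kappa$-non-collapsing must be applied with constants depending only on $n,v_0$ at each step so that the final bound $C(n,v_0)$ is truly independent of $\e$, after which the choice of small $\e$ closes the argument.
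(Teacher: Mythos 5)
Your proposal does not reach the step that actually carries the lemma: controlling the curvature on the time stretch $(\a^2T,T]$, where hypothesis (i) gives no bound and hypothesis (iii) is only qualitative (not uniform along your sequence after rescaling). In your scheme the Cheeger--Gromov limit $g_\infty$ can only be extracted on $(0,\a^2]$, so ``extend the curvature bound forward to $t=1$ and pass it back to $\tilde g_k$, extending the convergence to $[0,1]$'' is circular: to extend the convergence past $\a^2$ you need uniform curvature bounds for $\tilde g_k$ on $[\a^2,1]$, which is precisely what the lemma is supposed to produce. The tools you invoke there cannot substitute: Perelman's $\kappa$-noncollapsing yields no curvature bound at all, and a ``Shi-type doubling'' estimate starting from a single time-slice bound is false without a volume/noncollapsing hypothesis --- and once you add that hypothesis, the statement you need \emph{is} Perelman's pseudolocality (Theorem \ref{t-pseudolocal}), which your argument never uses. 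There are further gaps: Lemma \ref{l-curvest-1} requires the volume lower bound at time $0$ of the flow it is applied to, but your limit omits $t=0$, so $V_{g_\infty(0^+)}(x_0^\infty,r)\ge v_0r^{2n}$ is not available without first proving a volume lower bound at positive times uniform in $k$ (that is exactly Lemma 2.3 of \cite{SimonTopping2016}, which needs the $\mathrm{BK}\ge -1$ input from Theorem \ref{t-lowerbound-BK-1} --- the paper's route). Finally, the conclusion is quantitative, $|\Rm(x_0,T)|\le(\e\a)^{-2}\Theta/T$ with $\e=\e(n,v_0)$ fixed in advance (it must be independent of $\Theta$ for the later contradiction in Theorem \ref{t-curvature-est-2}); your plan of ``choosing $\e$ small at the end'' and producing an unspecified $C(n,v_0)$ cannot deliver this constant.

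For comparison, the paper's proof is direct and short: (1) apply Theorem \ref{t-lowerbound-BK-1} with $\theta=1$, $\beta=\tfrac12$ to get $\mathrm{BK}(g(t))\ge-1$, hence $\Ric\ge -n$, on $B_t(x_0,\tfrac12)$ for $t\in(0,\a^2T]$; (2) use Lemma 2.3 of \cite{SimonTopping2016} to propagate the volume bound, $V_t(x_0,r)\ge\delta(n,v_0)r^{2n}$ for $r\le\tfrac12$, with $\delta$ independent of $\Theta$; (3) at time $\a^2T$ one has $|\Rm|\le\Theta/(\a^2T)=r_0^{-2}$ on $B_{\a^2T}(x_0,r_0)$ with $r_0$ chosen so that $\e^2r_0^2=(1-\a^2)T$, and Perelman's pseudolocality applied on $[\a^2T,\a^2T+\e^2r_0^2]=[\a^2T,T]$ gives $|\Rm(x_0,T)|\le(\e r_0)^{-2}=(\e\a)^{-2}\Theta/T$, the defining relation $\e^2\a^2=(1-\a^2)\Theta$ making the lifespan of pseudolocality match the gap exactly. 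Your compactness-and-contradiction framework, as written, cannot bridge that gap.
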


We need the following pseudolocality theorem of Perelman \cite{Perelman-1} as in the form of \cite[Theorem 6.2]{SimonTopping2016}:
\begin{thm}\label{t-pseudolocal}
For any $\delta>0$, and positive integer $m >0$, there exists $\e > 0$
such that if $r_0 > 0$ and $(M^m, g(t)))$ is a complete solution to the Ricci flow on $M\times[0,\e^2r_0^2]$,   with the properties that $|\Rm (g(0))|\le r_0^{-2}$ on $B_0(x_0,r_0)$  and
$V_0 (x_0, r_0)\ge \delta r_0^{m}$,
then $|\Rm|(g(t))\le (\e r_0)^{-2}$ for any  $0\le t\le \e^2r_0^2$ and
throughout $B_t(x_0,\e r_0)$.
\end{thm}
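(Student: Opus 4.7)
The plan is to apply Perelman's pseudolocality (Theorem~\ref{t-pseudolocal}) to $g(\cdot)$ on the subinterval $[s,T]$, where $s:=\a^2T$, at the parabolic scale $r_0:=\a\sqrt{T/\Theta}$. The algebraic relation $\e^2\a^2=(1-\a^2)\Theta$, equivalently $\a^2=\Theta/(\Theta+\e^2)$, is chosen precisely so that $T-s=(1-\a^2)T=\e^2r_0^2$, matching the time horizon allowed by pseudolocality at scale $r_0$; moreover $r_0^{-2}=\Theta/s$, so hypothesis~(i) immediately yields the pointwise bound $|\Rm(g(s))|\le r_0^{-2}$ on $B_s(x_0,r_0)\subset B_s(x_0,1)$.

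The order of operations is as follows. First I would fix the volume ratio $\delta=\delta(n,v_0)>0$ produced in Step~2 below, and let $\e=\e(n,v_0)$ be the corresponding pseudolocality constant from Theorem~\ref{t-pseudolocal}. This determines $\a\in(0,1)$, and hence $s$ and $r_0$, as above. Choose $T_2=T_2(n,v_0,\Theta)$ small enough that $r_0\le \tfrac12$ and so that the smallness hypotheses of Simon--Topping's Lemma~2.3 are met.

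The heart of the argument is establishing volume non-collapsing at the intermediate time $s$:
\[ V_s(x_0,r_0)\ge \delta\,r_0^{2n}. \]
This is exactly the content of Lemma~2.3 of \cite{SimonTopping2016}: it takes the curvature hypothesis $|\Rm(g(t))|\le\Theta/t$ on $B_t(x_0,1)\times(0,s]$ together with the initial non-collapsing $V_0(x_0,r)\ge v_0 r^{2n}$ for $r\le\tfrac12$, and propagates the volume bound forward to time $s$. Hypotheses~(iii) and~(iv) provide the background regularity, namely $|\Rm(g(t))|$ bounded away from $t=0$ and $\Ric(g(0))\ge 0$ near $x_0$, needed to apply Simon--Topping's lemma. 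Combined with the curvature bound at time $s$, Theorem~\ref{t-pseudolocal} applied to the time-shifted flow $g(\cdot+s)$ on $[0,\e^2r_0^2]$ (with $m=2n$, non-collapsing constant $\delta$) gives
\[ |\Rm(g(T))(x_0)|\le(\e r_0)^{-2}=\frac{\Theta}{\e^2\a^2\,T}=\frac{1}{(\e\a)^2}\cdot\frac{\Theta}{T}, \]
which is the claimed estimate.

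The main obstacle is the volume non-collapsing step. Because $|\Ric|$ is only controlled by $c(n)\Theta/t$, it is not integrable down to $t=0$, so naive volume and distance distortion estimates diverge. Simon--Topping's Lemma~2.3 circumvents this by slicing $[0,s]$ dyadically and using Perelman's distance-evolution inequality at each scale, telescoping to produce an inclusion $B_s(x_0,r_0)\supset B_0(x_0,cr_0)$ for an explicit $c>0$, at the price of shrinking $T_2$ by a factor depending on $\Theta$. A delicate point is that the resulting $\delta$ must be chosen to depend only on $n$ and $v_0$ (not on $\Theta$), so that $\e=\e(n,v_0)$ as asserted; the $\Theta$-dependence is absorbed entirely into $T_2$.
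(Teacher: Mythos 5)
Your proposal does not prove the statement in question. The statement is Perelman's pseudolocality theorem itself (in the quantitative form of Simon--Topping, Theorem 6.2 of \cite{SimonTopping2016}): an initial curvature bound $|\Rm(g(0))|\le r_0^{-2}$ on $B_0(x_0,r_0)$ together with the non-collapsing $V_0(x_0,r_0)\ge\delta r_0^m$ propagates to a curvature bound $(\e r_0)^{-2}$ on $B_t(x_0,\e r_0)$ for $t\le \e^2 r_0^2$. Your very first sentence is ``apply Perelman's pseudolocality (Theorem~\ref{t-pseudolocal})'' --- you are invoking the theorem you were asked to prove, which is circular. What you have actually written is a (reasonably faithful) outline of the paper's proof of Lemma~\ref{l-curvest-2}, the curvature-doubling estimate: the choice of scale $r_0=\a\sqrt{T/\Theta}$, the relation $\e^2\a^2=(1-\a^2)\Theta$ matching $T-\a^2T=\e^2r_0^2$, the use of Simon--Topping's Lemma 2.3 to get $V_{\a^2T}(x_0,r_0)\ge\delta r_0^{2n}$ with $\delta$ independent of $\Theta$, and the final application of pseudolocality on $[\a^2T,T]$. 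That is the argument for a different statement, one that \emph{consumes} Theorem~\ref{t-pseudolocal} as a black box.

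In the paper, Theorem~\ref{t-pseudolocal} is not proved at all; it is imported from Perelman \cite{Perelman-1} and \cite{SimonTopping2016}. If one were to actually prove it, none of the ingredients you list would suffice: the proof requires Perelman's local entropy machinery --- monotonicity of the $\mathcal{W}$-functional (or reduced volume), the logarithmic Sobolev inequality on the non-collapsed ball, and a point-picking/blow-up contradiction argument producing a $\kappa$-noncollapsed ancient solution. Your proposal contains no trace of these, so as a proof of the stated theorem it has no content beyond the assertion of the theorem itself.
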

\begin{proof}[Proof of lemma \ref{t-curvature-est-2}] By \cite[Lemma 2.3]{SimonTopping2016}, there is a constant $\delta(n,v_0)>0$ such that if $g(t)$ is a complete solution to the \KR flow satisfying curvature bound in (i) and is such that $\Ric(g(t))\ge -n$   on $B_t(x_0,1)$, then for $t$ small enough then for $0<r_0\le \frac12$
\be\label{e-volume}
V_t(x_0,r_0)\ge \delta r_0^{2n}.
\ee
We should emphasis that $\delta$ does not depend on $\Theta$. Let  $\e=\e(n,\delta_0)=\e(n,v_0) $  be the positive constant in Theorem \ref{t-pseudolocal} so that the lower bound of the volume satisfies \eqref{e-volume} and $m=2n$.

 Apply Theorem \ref{t-lowerbound-BK-1} for    $\theta=1$, and $\beta=\frac12$, we can find   $1> T_0(n,\Theta,k)>0$ such that if $T\le T_0$ and if $g(t)$ satisfies (i) and (iv), then
$$
\mathrm{BK}(g(t))\ge -1
$$
on $B_t(x_0,\frac12)$ for all $t\in (0,\a^2T]$.   By \cite[Lemma 2.3]{SimonTopping2016}, there exists $T_2(n,\Theta,v_0)>0$ such that if $T<T_2(n,\Theta,v_0)$ then
$$
V_t(x_0,r)\ge \delta r^{2n}
$$
for all $t\in (0,\a^2T]$ and $0<r\le \frac12$. We may assume that $T_2\le \frac14\e^2\wedge T_0$. Then we have the following situation: $V_{\a^2 T}(x_0,r)\ge \delta r^{2n}$ for $r\le \frac12$, $|\Rm (g(\a^2T))|\le \Theta/(\a^2T) $ on $B_{g(\a^2T)}(x_0,1)$. Let $r_0>0$ be such that $$ \e^2 r_0^2= \frac{\e^2\a^2T}{\Theta}=(1-\a^2)T<T\le \frac14\e^2. $$
Then $r_0\le \frac12$ and $r_0^{-2}=\Theta/(\a^2 T)$. Apply   Theorem \ref{t-pseudolocal} to the \KR flow $g(t)$ on $[\a^2T, T]=[\a^2T, \a^2T+\e^2r_0^2]$, we conclude that
$$
|\Rm(g(x_0,T)|\le (\e r_0)^{-2}=\frac{1}{(1-\a^2)T}=\frac1{\e^2\a^2}\cdot\frac{\Theta}{T}
$$
because $\e^2\a^2=(1-\a^2)\Theta$.
\end{proof}

\begin{proof}[Proof of Theorem \ref{t-curvature-est-2}]\noindent

\begin{itemize}

  \item  Let   $ T_1(n,v_0), \Theta(n,v_0)>0$ be the constants in Lemma \ref{l-curvest-1} with   $K=1$.
      \item Let $\e(n,v_0)$, $\a(n,v_0)$  and $T_2(n,v_0,\Theta)$ be the constants in Lemma \ref{l-curvest-2}.

          \item Let $T_0(n,v_0)>0$   be the constants in Theorem \ref{t-lowerbound-BK-1}  with $a_0=((\e \a)^{-2}+1)\Theta$, $r=2$ and $\theta=1$, $\beta=\frac12$.

\end{itemize}
Let $T^*=T_0 \wedge T_1\wedge T_2$. Then $T^*=T^*(n,v_0)$.

 By \cite[Lemma 4.2]{SimonTopping2016}, there is $\beta(n)>0$ so that for all $L>0$, one of the following is true:

\begin{enumerate}
  \item [ \underline{\bf Case 1}:] For all $t\in(0,T)$ with $t\le \displaystyle{\frac{9}{\beta^2 a_0(L+1)^2}}$, we have
      $$B_t(x_0,3-(L+1)\beta\sqrt {a_0t})\subset  B_0(x_0,3)$$
      and
      $$
      |\Rm(g(t))|\le \frac{a_0}t
      $$
      on $B_t(x_0,3-(L+1)\beta\sqrt {a_0t})\supset B_t(x_0,2)$.

  \item [\underline{\bf Case 2}:]  There exist  $0<t_0<T$ with $t_0\le \displaystyle{\frac{9}{\beta^2 a_0(L+1)^2}}$ and $z_0$   in the closure of $B_{t_0}(x_0,3-(L+1)\beta\sqrt {a_0t_0})$ such that
      \begin{enumerate}
        \item [(a)] $|\Rm(z_0,t_0)|=\displaystyle{\frac{a_0}{t_0}}$;
        \item [(b)] $|\Rm(g(t))|\le \displaystyle{\frac{a_0}{t_0}}$ on $B_t(z_0,L(1-\a)\beta\sqrt {a_0t_0})$ for all $t\in (0,\a^2 t_0]$; and
        \item [(c)] for all $t\in (0,\a^2 t_0]$
        $
        B_t(z_0,L(1-\a)\beta\sqrt {a_0t_0})\subset B_0(x_0,3).
        $
      \end{enumerate}

\end{enumerate}

 Fix $L(n,v_0)>0$ such that
      \be\label{e-L-1}
      \displaystyle{\frac{9}{\beta^2 a_0(L+1)^2}}\le T^*;\  \ L(1-\a)\beta\sqrt{a_0T^*}\ge 8.
      \ee
      Let $$T^{**}=\min\{T^*, \displaystyle{\frac{9}{\beta^2 a_0(L+1)^2}}\}= \displaystyle{\frac{9}{\beta^2 a_0(L+1)^2}}.$$

       To prove the theorem, we may assume that $T\le T^{**}$.

We want to prove {\bf Case 2} in the above cannot happen.  Suppose $t_0, z_0$ is as in {\bf Case 2}.
Let
$$
h(y,s)=\frac{T^*}{t_0}g(y, \frac{t_0}{T^*}s).
$$
$0\le s\le \frac{TT^*}{t_0}$.
Let $\wt \Rm$ be the curvature tensor of $h$ etc. Then
\begin{enumerate}
  \item [(i)] $|\wt\Rm(z_0,T^*)|=\displaystyle{\frac{a_0}{T^*}=\frac{\lf((\e\a)^{-2}+1\ri)\Theta}{T^*}}$;
  \item [(ii)] $|\wt\Rm(z_0,s)|\le  \displaystyle{\frac{a_0}{s}}$ in $\wt B_s(z_0, 8)$ for all $0<s\le \a^2T^*$ because of \eqref{e-L-1} .
  \item [(iii)] $\mathrm{BK}(h(0))\ge 0$ on $\wt B_0(z_0,8)$.
  \item[(iv)] $\wt V_0(z_0,r)\ge v_0r^{2n}$ for all $r\le 2$.
\end{enumerate}
By (ii), (iii) and Theorem \ref{t-lowerbound-BK-1}, we conclude that $\mathrm{BK}(h(s))\ge-1$ on $\wt B_s(z_0,2)$ for $0<s\le \a^2T^*$.
By (iv) and Lemma \ref{l-curvest-1}, we conclude that $|\wt\Rm(h(s))|\le \Theta/s$ on $\wt B_s(z_0,1)$ for all $s\in (0,\a^2 T^*]$. By Lemma \ref{l-curvest-2}, this implies
  $$
 | \wt \Rm(z_0,T^*)|\le \displaystyle{\frac{ (\e\a)^{-2} \Theta}{T^*}}
 $$
 which contradicts   (i) in the above.

 Hence {\bf Case 1} must be true. By the choice of $L$ in \eqref{e-L-1}, we have
 $$
 3-(L+1)\beta\sqrt{a_0t}\ge2
 $$
 for $t\le T^{**}$.
 This completes the proof of the theorem.
\end{proof}

\end{document}